\renewcommand{\epsilon}{\varepsilon}
\let\originalleft\left 
\let\originalright\right
\renewcommand{\left}{\mathopen{}\mathclose\bgroup\originalleft}
\renewcommand{\right}{\aftergroup\egroup\originalright}
\newcommand{\prodprime}{\sideset{}{'}\prod}
\newcommand{\dd}{\mathrm{d}}
\newcommand{\lb}{\left[}
\newcommand{\rb}{\right]}
\newcommand{\lp}{\left(}
\newcommand{\rp}{\right)}
\newcommand{\lbr}{\left\lbrace}
\newcommand{\rbr}{\right\rbrace}
\newcommand{\hsp}{\hspace{0.1cm}}
\newcommand{\trace}{\mathrm{Tr}}
\newcommand{\R}{{\bf R}}
\newcommand{\Z}{{\bf Z}}
\newcommand{\Q}{{\bf Q}}
\newcommand{\A}{\mathbb{A}}
\newcommand{\Hh}{\mathbb{H}}
\newcommand{\C}{{\bf C}}
\newcommand{\F}{\mathbb{F}}
\newcommand{\adele}{\mathbb{A}}
\newcommand{\G}{\mathbb{G}}
\newcommand{\SL}{\mathrm{SL}}
\newcommand{\SP}{\mathrm{Sp}}
\newcommand{\GSP}{\mathrm{GSp}}
\newcommand{\GL}{\mathrm{GL}}
\newcommand{\SO}{\mathrm{SO}}
\newcommand{\SU}{\mathrm{SU}}
\newcommand{\yoyo}{{2n}}
\newcommand{\ringint}{\mathcal{O}}
\newcommand{\adjustin}{\begin{adjustwidth}{4em}{4em}}
\newcommand{\adjustout}{\end{adjustwidth}}
\renewcommand{\xRightarrow}[2][]{\ext@arrow 0359\Rightarrowfill@{#1}{#2}}
\numberwithin{equation}{section}
\newtheorem{theorem}{Theorem}
\newcounter{definition}
\newtheorem{prop}[theorem]{Proposition}
\newtheorem{lemma}[theorem]{Lemma}
\newtheorem{corollary}[theorem]{Corollary}
\numberwithin{theorem}{section}
\numberwithin{definition}{section}
\definecolor{dkgreen}{rgb}{0,0.6,0}
\definecolor{gray}{rgb}{0.5,0.5,0.5}
\definecolor{mauve}{rgb}{0.58,0,0.82}
\tiny\color{gray},
\begin{document}
\begin{center}
    \textbf{THE SIEGEL MODULAR GROUP IS THE LATTICE OF} \\
    \textbf{MINIMAL COVOLUME IN THE SYMPLECTIC GROUP} \\[+0.5em]
    Amir Džambić, Kristian Holm, Ralf Köhl \\[+0.5em]
    \today
\end{center}
\adjustin
\begin{small}
    \textbf{Abstract.} Let $n \geqslant 2$. We prove that, up to conjugation, $\SP_\yoyo(\Z)$ is the unique lattice in $\SP_\yoyo(\R)$ of the smallest covolume.\\\\
    \textit{Keywords:} Symplectic group, arithmetic group, lattice, covolume, Prasad's volume formula \\
    \textit{MSC:} 22E40 (Primary), 11E57, 20G30, 51M25 (Secondary)  \\[-4em]
\end{small}
\adjustout

\tableofcontents

\section{Introduction}
A lattice in a semisimple Lie group $G$ is a discrete subgroup $\Gamma$ such that the $G$-invariant measure on the quotient $G / \Gamma$ is finite. For example, $\SL_n (\Z)$ is a lattice in $\SL_n (\R)$, and $\SP_\yoyo (\Z)$ is a lattice in $\SP_\yoyo (\R)$, for any $n \geqslant 1$. In fact, by the work of Siegel, the covolumes of these lattices are known exactly: When the measures $\mu$ on $G = \SL_n (\R), \SP_{2n}(\R)$ are suitably normalized, one has the beautiful formulas 
\begin{align} \label{siegelvolumeformulas}
    \mu (G / \Gamma) = \begin{cases}
        \zeta(2) \zeta(3) \cdots \zeta(n), &\text{if } G = \SL_n (\R),\\[+0.3em]
        \zeta(2) \zeta(4) \cdots \zeta(2n), &\text{if } G = \SP_\yoyo (\R),
    \end{cases}
\end{align}
where $\Gamma = G \cap \GL_n(\Z)$ in either case, and $\mu ( G / \Gamma)$ denotes the volume of a fundamental domain of $\Gamma$ in $G$. However, both the special linear and the symplectic group contain many more lattices than these standard examples. \par In this paper, we investigate lattices of \textit{minimal} covolume; that is, lattices which are as dense as possible in their ambient groups. Moreover, we focus our attention on $\SP_\yoyo (\R)$. An obvious question is whether such a lattice necessarily exists --- for example, in the case of the group $\R^n$, one may find lattices of arbitrarily large or small covolume. The setting when one considers a semisimple Lie group such as $G = \SP_\yoyo(\R)$, however, is more rigid, as the Kazdan--Margulis Theorem \cite{kazmar} shows that lattices of minimal covolume always exist. Clearly, such a lattice has to be maximal, i.e. not properly contained in any other lattice. Thus, we let $\Gamma \subseteq \SP_\yoyo(\R)$ be a lattice with minimal covolume. \par 
By the celebrated arithmeticity theorem of Margulis (see e.g. \cite[Thm. 16.3.1]{morris}), any irreducible lattice $\Lambda$ in a semisimple Lie group $G$ of real rank greater than $1$ is also an arithmetic subgroup of $G$. In particular, if such a Lie group $G$ is a simple, any lattice is arithmetic. That is, there exists a $\Q$-embedding $\iota : G \rightarrow \GL_n (\R)$ such that $\Lambda$ is commensurable to the group $G(\Z) = \iota^{-1} (\iota (G) \cap \GL_n (\Z) )$ of integer points of $G$. For our purposes, the utility of this identification of lattices with arithmetic subgroups is that arithmetic subgroups are, in some sense, easier to describe. However, as any individual arithmetic subgroup of $G$ is associated to a corresponding embedding of $G$ into $\GL_n (\R)$, investigating all arithmetic subgroups involves the abstraction of treating all $\Q$-isomorphic copies of $G$ in $\GL_n (\R)$ equally. \par 
We therefore let $\G$ be an algebraic group over a number field $K$, for which there exists an embedding $\upsilon_0 : K \rightarrow \R$ which induces an $\R$-isomorphism from $\SP_\yoyo$ (the standard split form of the symplectic group) to $\G$. This is enough to capture any candidate for $\Gamma$ that we will want to consider. Moreover, it leads to a useful alternative characterization of $\Gamma$: Rohlfs gave a cohomological criterion \cite[Satz 3.5]{rohlfs} (see also \cite[Prop. 1.4]{borelprasad}) for maximality of arithmetic subgroups, according to which the lattice $\Gamma \subseteq \SP_\yoyo (\R)$ must be conjugate to the normalizer of a so-called \textit{principal arithmetic subgroup} $\Lambda$ of $\G (K)$. Such subgroups are distinguished by being topologically well-behaved in relation to the group $\G (\A_K)$ of adelic points of $\G$. In particular, $\Lambda$ is given by the intersection $\G(K) \cap \Pi_{\upsilon < \infty} P_\upsilon$ for a certain (topologically coherent) family of compact-open (more precisely, \textit{parahoric}) subgroups $P_\upsilon \subseteq \G (K_\upsilon)$, where $\upsilon$ ranges over all finite places of $K$. These facts make the problem of computing the covolume of $\Lambda$, and thereby identifying a lattice $\Gamma$ of minimal covolume, amenable to a wide range of analytic and algebraic tools. \par 
The main tool for studying the covolume of principal arithmetic subgroups of certain well-behaved algebraic groups $\G$ is \textit{Prasad's volume formula}, which expresses the covolume of $\Lambda$ (identified with its diagonal embedding) in
\begin{align*}
\G_\infty := \prod_{\upsilon \mid \infty} \G (K_\upsilon)  
\end{align*}
in terms of a variety of arithmetic invariants of the number field $K$ and, additionally, in terms of the product of certain measures of the parahoric subgroups $\lbr P_\upsilon : \upsilon < \infty \rbr$ (see Section \ref{subsectionPFV} for more details). As such, it has been used in several works to identify lattices of minimal covolume in different Lie groups. Many concrete instances of this problem have been studied before: The case $\SL_2 (\C)$ was studied from different perspectives by Meyerhoff \cite{meyerhoff}, Gehring--Martin \cite{gehringmartin}, and Marshall--Martin \cite{marshallmartin}. Lubotzky \cite{lubotzky} considered this group over function fields and studied minimal covolume lattices of $\SL_2 (\mathbb{F}_q (( t^{-1} )) )$. For certain indefinite orthogonal groups, lattices of minimal covolume were identified by Belolipetsky \cite{belolipetsky} (in the case of $\SO(n,1)$) and Belolipetsky--Emery \cite{belolipetskyemery} (for $\mathrm{PO}(n,1)^\circ$). Also, Emery--Kim \cite{emerykim} considered the analogous question for the indefinite symplectic group $\SP(1,n)$. Most recently, Thilmany \cite{thilmany} proved that in the special linear groups $\SL_n (\R)$ with $n \geqslant 3$, the lattices of minimal covolume are precisely the conjugates of the standard lattice $\SL_n (\Z)$. By contrast, for the case $n = 2$ it was already known to Siegel \cite{siegelSL2} that the $(2,3,7)$ triangle group $\langle s, t \mid s^2 = t^3 = (st)^7 = 1 \rangle$ has minimal covolume in $\SL_2 (\R)$. In light of Thilmany's result about $\SL_n (\R)$ and the fact that $\SL_2 (\R) = \SP_2(\R)$, it is natural to ask about the corresponding minimal covolume lattices in $\SP_\yoyo (\R)$. The purpose of this paper is to answer this question by proving the following theorem. 
\begin{theorem}\label{maintheorem1}
    For $n \geqslant 2$, let $\Gamma \subseteq \SP_\yoyo (\R)$ be a lattice of minimal covolume. Then for some $g \in \SP_\yoyo (\R)$, $\Gamma = g^{-1} \SP_\yoyo (\Z) g$.
\end{theorem}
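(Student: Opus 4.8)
The plan is to run the standard Prasad-volume-formula argument, in the spirit of Thilmany's treatment of $\SL_n(\R)$ \cite{thilmany} and of the earlier works on orthogonal and unitary groups cited above. By the reductions recalled before the statement --- Kazhdan--Margulis, Margulis arithmeticity (applicable since $\SP_\yoyo(\R)$ has real rank $n\geqslant 2$), and Rohlfs' maximality criterion --- the lattice $\Gamma$ is conjugate to the normalizer $\widehat\Lambda$ of a principal arithmetic subgroup $\Lambda=\G(K)\cap\prod_{\upsilon<\infty}P_\upsilon$ of an absolutely almost simple, simply connected $K$-group $\G$ of type $C_n$ for which some embedding $\upsilon_0\colon K\hookrightarrow\R$ induces an isomorphism $\G(K\otimes_{\upsilon_0}\R)\cong\SP_\yoyo(\R)$, where $(P_\upsilon)$ is a coherent family of parahoric subgroups. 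Since $\Gamma$ is to be a lattice in $\SP_\yoyo(\R)$ alone, $\G(K_\upsilon)$ must be compact at every infinite place $\upsilon\neq\upsilon_0$ (a complex place would contribute the noncompact group $\SP_\yoyo(\C)$), so $K$ is totally real and $\G_\infty=\SP_\yoyo(\R)\times C$ with $C$ compact. It will therefore suffice to show that $\mathrm{covol}(\Gamma)$ strictly exceeds the Siegel value $\mathrm{covol}(\SP_\yoyo(\Z))=\zeta(2)\zeta(4)\cdots\zeta(2n)$ of \eqref{siegelvolumeformulas} unless $K=\Q$, $\G=\SP_\yoyo$ is split, every $P_\upsilon$ is hyperspecial, and $\widehat\Lambda=\Lambda$ --- in which case $\Lambda$, and hence $\Gamma$, is conjugate to $\SP_\yoyo(\Z)$ inside $\SP_\yoyo(\R)$.

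The first step is to assemble Prasad's formula. For type $C_n$ the exponents are $1,3,\dots,2n-1$ and the Dynkin diagram has no nontrivial symmetry, so every $K$-form of $\G$ is inner and the auxiliary field occurring in Prasad's formula is $K$ itself. After dividing by the volumes of the compact factors of $\G_\infty$ --- a standard computation that cancels all but a single normalization constant depending only on $n$, and is vacuous when $K=\Q$ --- Prasad's formula rewrites the relevant ratio as
\begin{align*}
\frac{\mathrm{covol}(\Gamma)}{\mathrm{covol}(\SP_\yoyo(\Z))}=D_K^{\tfrac12 n(2n+1)}\cdot\prod_{j=1}^{n}\frac{\zeta_K(2j)}{\zeta(2j)}\cdot\frac{\prod_{\upsilon<\infty}e_\upsilon}{[\widehat\Lambda:\Lambda]},
\end{align*}
where $D_K$ is the absolute discriminant of $K$ and $e_\upsilon\geqslant 1$ denotes the ratio of Prasad's local factor at $\upsilon$ to its value for a hyperspecial parahoric of the split form, so that $e_\upsilon=1$ exactly when $\G$ is unramified at $\upsilon$ and $P_\upsilon$ is hyperspecial, and otherwise $e_\upsilon$ is bounded below by an explicit constant $>1$, a ratio of orders of reductive groups over the residue field at $\upsilon$ (for instance, at $p=2$ an Iwahori subgroup of $\SP_4(\Q_2)$ contributes $e_2=45$). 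One also records the Rohlfs/Borel--Prasad bound for the normalizer index: $[\widehat\Lambda:\Lambda]$ is at most $h_K$ times a product, over the places where $\G$ is ramified or $P_\upsilon$ is not hyperspecial, of factors dividing $|\mu_2(K_\upsilon)|=2$; in particular $[\widehat\Lambda:\Lambda]=1$ in the standard case $K=\Q$, $\G=\SP_\yoyo$, all $P_\upsilon$ hyperspecial. It then remains to show the displayed ratio is $\geqslant 1$, with equality precisely in the standard case.

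The argument proceeds by two reductions. To force $K=\Q$: by the Minkowski--Odlyzko lower bounds $D_K$ grows at least geometrically in $[K:\Q]$, so for any totally real $K\neq\Q$ one has $D_K^{n(2n+1)/2}\geqslant D_K\geqslant 5^{n(2n+1)/2}$, whereas $\zeta_K(2j)>1$ yields $\prod_j\zeta_K(2j)/\prod_j\zeta(2j)>\bigl(\prod_{j\geqslant1}\zeta(2j)\bigr)^{-1}>\tfrac12$ uniformly in $n$, the product $\prod_\upsilon e_\upsilon$ is $\geqslant 1$, and --- since each bad place's index $e_\upsilon\geqslant 2$ absorbs the factor ($\leqslant 2$) it contributes to $[\widehat\Lambda:\Lambda]$ --- the residual normalizer contribution is at most $h_K\ll D_K^{1/2+\epsilon}$; these estimates make the displayed ratio strictly larger than $1$ whenever $K\neq\Q$. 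Over $\Q$ one has $D_K=1$, and since $\G$ is simply connected every parahoric of $\G(\Q_p)$ equals its own normalizer, so $[\widehat\Lambda:\Lambda]$ divides $2^{\#\mathrm{bad}}$ while each bad prime is overpaid by its $e_p>2$; thus a non-split $\G$ --- necessarily the unitary group of a skew-Hermitian form over a quaternion division algebra $D/\Q$, ramified at a nonempty even set of primes --- and any non-hyperspecial $P_p$ both strictly increase the ratio. Hence $\G=\SP_\yoyo$ is split over $\Q$ and every $P_p$ is hyperspecial, so $\Lambda$ is conjugate to $\SP_\yoyo(\Z)$ in $\SP_\yoyo(\R)$ --- the two $\SP_\yoyo(\Q_p)$-conjugacy classes of hyperspecial maximal parahorics are interchanged by an element of positive similitude, whose conjugation action on $\SP_\yoyo(\R)$ is inner up to a scalar --- and $[\widehat\Lambda:\Lambda]=1$, since by Margulis' commensurator theorem $N_{\SP_\yoyo(\R)}(\SP_\yoyo(\Z))\subseteq\SP_\yoyo(\Q)$, inside which the local self-normalizing property of $\SP_\yoyo(\Z_p)$ at every $p$ forces the normalizer to coincide with $\SP_\yoyo(\Z)$. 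This gives $\Gamma=g^{-1}\SP_\yoyo(\Z)g$ for some $g\in\SP_\yoyo(\R)$, as desired.

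The main obstacle will be the endgame over $\Q$: there the discriminant no longer carries the inequality, and the two competing quantities --- the excesses $e_p$ for non-hyperspecial parahorics and for the ramified primes of a non-split $\G$ on the one hand, and the normalizer index $[\widehat\Lambda:\Lambda]$ on the other --- are both finite and of comparable, moderate size, so making $\mathrm{covol}(\Gamma)>\zeta(2)\cdots\zeta(2n)$ strict and uniform in $n$ will require a careful place-by-place comparison of Prasad's local $\lambda$-factors against the Borel--Prasad index bound. One must also rule out every mixed coherent family of parahorics (Iwahori or intermediate parahorics at some primes), which again reduces to bounding products of orders of groups of the shape $\SP_\yoyo(\F_p)$ against their parabolic quotients, and the smallest case $n=2$ deserves a separate check, the powers of $D_K$ and the values $\zeta(2j)$ being then the smallest. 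This bookkeeping, rather than any single conceptual difficulty, is the crux of the proof.
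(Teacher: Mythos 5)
Your overall architecture (Kazhdan--Margulis, Margulis arithmeticity, Rohlfs, Prasad's formula, Borel--Prasad index bound, then the class-number-one argument for $\GSP_{2n}/\Q$ in the endgame) matches the paper's, and your final step over $\Q$ --- splitness plus hyperspecial parahorics everywhere, transitivity of $\GSP_\yoyo(\Q_p)$ on hyperspecial parahorics, and the self-normalizing property of $\SP_\yoyo(\Z)$ --- is essentially the paper's Section 4. But there is a genuine gap in the step you treat as soft, namely forcing $K=\Q$. Your displayed ratio omits the archimedean factor $\Pi(n)^{d_K-1}$, where $\Pi(n)=\prod_{j=1}^n (2j-1)!/(2\pi)^{2j}$: Prasad's formula carries $\Pi(n)^{d_K}$ (one Gamma-factor per real place, and these are \emph{not} cancelled by the compact factors, which already have volume $1$ in Prasad's normalization), while Siegel's reference covolume carries a single $\Pi(n)$. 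Since $\Pi(2)=3/(32\pi^6)\approx 9.7\cdot 10^{-5}$ and $\Pi(n)$ decreases rapidly for small $n$, the correct ratio for $d_K=2$ is roughly $D_K^{n(2n+1)/2}\cdot\Pi(n)\cdot(\text{bounded stuff})$, and for $(n,d_K,D_K)=(2,2,5)$ one has $5^5\cdot\Pi(2)\approx 0.3<1$. So your claim that $D_K\geqslant 5$ together with $\zeta_K(2j)>1$ and $h_K\ll D_K^{1/2+\epsilon}$ makes the ratio exceed $1$ for every totally real $K\neq\Q$ is false as stated; this tiny archimedean factor is exactly why the paper needs Brauer--Siegel, Zimmert's regulator bound, optimized Odlyzko pairs $(A,E)$, and then a field-by-field elimination of $\Q(\sqrt5)$, $\Q(\sqrt2)$, the cubic field of discriminant $49$, and several quartic and quintic fields, including explicit computations of the unit indices $\lb U_K^+:U_K^2\rb$ (a factor your index bound also drops).

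Moreover, even after all global estimates are exhausted, the case $K=\Q(\sqrt5)$, $n=2$ survives with a slack factor of $5$ and cannot be excluded by volume inequalities alone: the paper rules it out by showing that all local factors $e'(P_\upsilon)$ would have to equal $1$ (the residue fields of $\Q(\sqrt5)$ above $2$ and $3$ are too large for a non-hyperspecial parahoric to fit under the bound), whence $\G$ splits at every finite place, and then the parity constraint on the ramification set of the underlying quaternion algebra forces $\G$ to split at the second real place --- contradicting the compactness needed there. Some argument of this local--global type is unavoidable, and your proposal does not contain it. The concluding bookkeeping over $\Q$ that you flag as the ``crux'' is in fact the easy part (each bad prime contributes $e'(P_p)\geqslant 5/2$ against an index factor of at most $2$); the real difficulty sits precisely where your write-up waves it away.
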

\subsection{Outline of the argument}
We now describe the method of proof, which is based on \cite{thilmany} and other works in which questions of arithmetic lattices of minimal covolume are investigated.\par 
By the symplectic part of the formula \eqref{siegelvolumeformulas}, we know the covolume of $\Gamma_0 := \SP_\yoyo (\Z)$ in $\SP_\yoyo (\R)$ explicitly. The idea is then to use this number as a reference, which is to be compared to the covolume $\mu (\G_\infty / \Lambda )$ assigned by Prasad's volume formula to an arbitrary principal arithmetic subgroup $\Lambda$ of $\G (K)$, and thus, with the help of the relation
\begin{align*}
    \mu (\G_\infty / \Gamma) = \frac{1}{\lb \Gamma : \Lambda \rb} \mu (\G_\infty / \Lambda),
\end{align*}
to the covolume of $\Gamma$. However, a subtle point here is that the parametrization of the Haar measure on the maximal compact subgroup $\SO(2n) \cap \SP_\yoyo (\R)$ implicit in the equality \eqref{siegelvolumeformulas} is, in fact, different from the parametrization implicit in the volume formula; hence our reference covolume will have to be corrected with the appropriate factor, which 
turns out to be $\Pi (n) := \prod_{j = 1}^n (2 \pi)^{-2j}(2j-1)!$. Thus, we will instead be comparing $\mu (\G_\infty / \Gamma)$ to
\begin{align}\label{referencecovolume}
\Psi(n) := \mu (\SP_\yoyo (\R) / \Gamma_0) = \prod_{j = 1}^n \zeta (2j) \frac{(2j-1)!}{(2 \pi)^{2j}} = \Pi (n) \prod_{j = 1}^n \zeta (2j).
\end{align}
This comparison involves both global and local aspects, for which all relevant parameters are introduced and discussed in Section \ref{SectionPreliminaries}. \par 
As far as our global considerations are concerned, Prasad's Volume Formula involves certain arithmetic invariants of the number field $K$, such as the absolute value $D_K$ of the discriminant of $K$ and the degree $d_K := \lb K : \Q \rb$. As we are interested in $\Gamma$ of \textit{minimal} covolume, the condition $\mu (\SP_\yoyo (\R) / \Gamma) \leqslant \Psi(n)$, which results from the minimality we demand of $\mu(\SP_\yoyo (\R)/ \Gamma)$, will eventually put a number of restrictions on these arithmetic invariants, effectively narrowing down the list of possible number fields $K$ that can realize $\Gamma$ (in the sense of the discussion above). \par 
The restrictions mentioned above are described explicitly in Section \ref{SectionBoundingthecovol}, following a series of lower bounds on the covolume $\mu (\G (K_{\upsilon_0}) / \Gamma)$, where $\G(K_{\upsilon_0})$ denotes the image of $\SP_{2n} (\R)$ under the isomorphism induced by the embedding $\upsilon_0$ discussed above. These bounds are the result of a form of compromise: on the one hand, it is difficult to keep track of too many parameters (e.g. the regulator $R_K$, the class number $h_K$, etc.) at the same time, so it is convenient to eliminate as many of these as we can by estimating them in terms of $d_K$; on the other hand, we also want the precision of the lower bounds that \textit{do} involve these invariants. As such, we will initially use our crudest bounds to obtain upper bounds on $d_K$ and $D_K$ depending on the rank $n$ of our group, and then subsequently apply detailed estimates for each single case where the arithmetic invariants are no longer abstract parameters, but concrete numbers that we can either compute or look up. \par 
Once the number of admissible number fields $K$ is sufficiently small, we can take the relevant local aspects into account and analyze them on a case-by-case basis to determine if they can give rise to a lattice of minimal covolume. After identifying $\Q$ as the only possibility, we conclude that $\G$ must be the split form of the symplectic group, and that $\Gamma$ must be a conjugate of $\SP_\yoyo (\Z)$. This is accomplished in Section \ref{sectionlocalcontributions}.
\subsection{Notation and conventions}
We will use the following notation throughout:
\begin{itemize}
\item $\Q$, $\R$, and $\C$ will denote the fields of rational, real, and complex numbers (respectively). For a prime number $p$, $\Q_p$ will denote the field of $p$-adic numbers, and $\Z_p$ its ring of integers. $\F_q$ will denote the finite field with $q$ elements. 
\item $K$ will denote a number field, and $\A_K$ will denote the corresponding ring of adeles. The degree of $K$ (over $\Q$) will be denoted by $d_K$, and the absolute value of its discriminant (over $\Q$) will be denoted by $D_K$. The notation $\overline{K}$ will be used to denote an algebraic closure of $K$.
\item $\G$ will denote a linear algebraic group defined over $K$, which is a real form of $\SP_\yoyo (\R)$. Moreover, $\mathcal{G}$ will denote the unique quasisplit inner $K$-form of $\G$. \par 
Without necessarily mentioning it explicitly, we will occasionally consider $\G(K)$ and its subgroups as embedded diagonally into $\G(\adele_K)$ whenever this is appropriate.
\item The symbol $\upsilon$ will be used to denote a place of $K$, either infinite (in which case we write $\upsilon \mid \infty$) or finite (in which case we write $\upsilon < \infty$). Moreover, $\upsilon_0$ will denote a distinguished real place of $K$, over which our algebraic group $\G$ splits, i.e. $\G (K_{\upsilon_0}) \simeq \SP_\yoyo (\R)$. \par 
\item $P_\upsilon$ (where $\upsilon$ denotes a finite place of $K$) will denote a parahoric subgroup of $\G (K_\upsilon)$.
\item $\Gamma \subseteq \SP_\yoyo (\R)$ will denote a lattice of minimal covolume defined by the $K$-form $\G$ of $\SP_\yoyo$. (We will also consider the gamma function and denote it by $\Gamma$, but this will cause no ambiguity.) $\Gamma_0$ will denote the Siegel modular group $\SP_\yoyo(\Z)$. 
\item $\Lambda$ will denote the principal arithmetic subgroup of $\G_\infty$ defined by the collection of parahorics $\lbr P_\upsilon : \upsilon < \infty \rbr$, for which $\Gamma$ is its normalizer in the group $\G (\R)$ (cf. \cite[Prop. 1.4.iv)]{borelprasad}).
\item $\zeta$ will denote the Riemann zeta function, and $\zeta_K$ will denote the Dedekind zeta function of the number field $K$. 
\end{itemize}
\section{Preliminaries}\label{SectionPreliminaries}
We will now recall a number of definitions and results from algebraic number theory and the theory of algebraic and arithmetic groups. We end this section by describing the main technical tool needed in this paper, namely the volume formula due to G. Prasad.
\subsection{Number fields}\label{subsectionnumberfields}
Let $K$ be an algebraic number field, i.e. an extension of the rational numbers of degree $d_K < \infty$, and let $D_K$ be the absolute value of its discriminant. \par
The discriminant is of course unbounded as a function of the degree, but it is possible to give lower bounds for $D_K$ in terms of $d_K$. For our purposes, the series of bounds provided by Odlyzko for totally real $K$ will suffice: These bounds have the form
\begin{align}\label{odlyzkobounds}
    D_K > A^{d_K} e^{-E}, 
\end{align}
where the pair $(A,E)$ can be chosen from the list \cite{odlyzkowebsite}. \\ \par  
In the remainder of this section, we will assume that $K$ is totally real. This means that $K$ has precisely $r = d_K$ distinct embeddings $\sigma_1, \ldots, \sigma_{r}$ into $\C$ with $\sigma_i (K) \subseteq \R$ for all $i$. If $\ringint_K \subseteq K$ is the ring of algebraic integers of $K$, the multiplicative group $\ringint_K^\times$ of units is a finitely generated abelian group and can therefore be decomposed as $\ringint_K^\times \simeq \mu (K) \times U_K$, where $\mu (K)$ denotes the torsion subgroup, consisting of units of finite order, and $U_K$ denotes the free part. Then the statement of Dirichlet's unit theorem \cite[Thm. I.7.3]{neukirch} is that $U_K$ has rank $r - 1 = d_K - 1$. Let us suppose that $K \neq \Q$ so that this rank is non-zero. Then, any set of $r-1$ units $\lbr \varepsilon_1, \ldots, \varepsilon_{r-1} \rbr \subseteq \ringint_K^\times$ that generate the free group $U_K$ is called a \textit{fundamental system of units} or a collection of \textit{fundamental units}. For example, if $K$ is a quadratic number field, then a fundamental unit is given by 
\begin{align}\label{quadnffundamentalunit}
\varepsilon = \frac{a + b\sqrt{D_K}}{2},
\end{align}
where $(a,b) \in \Z_+^2$ is the smallest pair of integers satisfying Pell's equation $a^2 - D_K b^2 = \pm 4$.
\par Two particular subgroups of $U_K$ will be of special interest to us, namely the group of \textit{totally positive} units
\begin{align*}
U_K^+ := \lbr u \in \ringint_K^\times : \sigma_i (u) > 0 \text{ for } i = 1, \ldots, d_K \rbr,
\end{align*}
and the group $U_K^2$ of squares of units. If $u^2 \in U_K^2$ is any such square, then naturally $u^2$ is totally positive as $\sigma_i (u) \in \R$ for each $i$; that is, $U_K^2 \subseteq U_K^+$ is a subgroup. For its index we will need the following facts, which are straightforward consequences of Dirichlet's unit theorem:
\begin{itemize}
\item With no assumptions on the totally real number field $K$, one always has
\begin{align}\label{totallypositivemodsquares}
\lb U_K^+ : U_K^2 \rb = \left| U_K^+ / U_K^2 \right| \leqslant 2^{d_K + r_K - 1} = 2^{2 d_K - 1}.
\end{align}
\item If $K$ is a real quadratic number field with fundamental unit $\varepsilon$, it is known that if $\varepsilon$ is not totally positive, then 
\begin{align}\label{quadraticfieldnottotallypositiveFU}
\lb U_K^+ : U_K^2 \rb = 1.
\end{align} 
\end{itemize}
\par Under the Minkowski embedding 
\begin{align*}
\log^+ \! \circ \, \bm{\sigma} = (\log^+ \! \circ \, \sigma_1, \ldots, \log^+ \!  \circ \, \sigma_{r}) : K^\times \longrightarrow \R^{r}, \quad \quad \hspace{-1em}\alpha \longmapsto (\log | \sigma_1 (\alpha) |, \ldots, \log | \sigma_r (\alpha) | ),
\end{align*}
the image $\bm{\sigma}(U_K) \subseteq \R^{r}$ of the multiplicative group $U_K$ is a free additive group of rank $r-1$, or in other words, an $(r-1)$-dimensional lattice in its $(r-1)$-dimensional ambient space $\bm{\sigma}(U_K) \otimes \R \subseteq \R^r$. (A collection of fundamental units could then also be defined as the preimage of any basis for $\bm{\sigma}(U_K)$ under $\log^+ \! \circ \, \bm{\sigma}$.) Denoting the (finite) covolume of this lattice in $\bm{\sigma}(U_K) \otimes \R$ by 
\begin{align*}
V_K := \mathrm{vol}\lp ( \bm{\sigma}(U_K) \otimes \R ) / \bm{\sigma}(U_K) \rp,
\end{align*}
we obtain the \textit{regulator} of $K$ as the scaled covolume $R_K = V_K / \sqrt{r} = V_K / \sqrt{d_K}$. (In case $K = \Q$, one defines $R_K := 1$.) It is a useful fact that this quantity can be bounded from below in terms of the degree $d_K$ of $K$. Indeed, Zimmert proved \cite{zimmert} that for any totally real number field $K$,
\begin{align}\label{zimmert}
R_K \geqslant 0.04 e^{0.46 \cdot d_K}.
\end{align}
\par Although the regulator of $K$ does not appear in relation to Prasad's volume formula, it is closely related to an invariant of $K$ that appears in the crucial estimate \eqref{indexboundgaloiscohomology} of the index $\lb \Gamma : \Lambda \rb$ (see Section \ref{subsectionPFV}). This invariant is the \textit{class number} $h_K$ of $K$ (see \cite[§I.6]{neukirch}). The relation between $R_K$ and $h_K$ is given by the following version of the Brauer--Siegel formula, which appears at various places in the literature concerning the covolumes of arithmetic groups (e.g. \cite[eq. (6.1)]{borelprasad}).
\begin{prop}[{Corollary to \cite{siegel35}}]\label{brauersiegel}
Let $K$ be a totally real number field of degree $d_K = \lb K : \Q \rb$, and let $\zeta_K$ denote the Dedekind zeta function of $K$. Let $D_K$ denote the absolute value of the discriminant of $K$, and let $h_K$ and $R_K$ denote the class number and the regulator of $K$, respectively. Then for any $t > 0$, one has the estimate
\begin{align*}
R_K h_k \leqslant t(t+1)2^{1-d_K} \lp \pi^{-d_K} D_K \rp^{(1+t)/2} \Gamma \lp \frac{1 + t}{2} \rp^{d_K} \zeta_K (1+t).
\end{align*}
\end{prop}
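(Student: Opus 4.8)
The plan is to obtain the estimate as a consequence of the analytic class number formula together with Siegel's bound on the residue of $\zeta_K$ at $s = 1$, the latter being the substantive analytic input. First I would record the bookkeeping. Since $K$ is totally real of degree $d_K$, its group of roots of unity is $\{\pm 1\}$, so $w_K = 2$, while $r_1 = d_K$ and $r_2 = 0$. The analytic class number formula \cite[Ch.~VII]{neukirch} then gives
\begin{align*}
\kappa_K := \operatorname{Res}_{s=1}\zeta_K(s) = \frac{2^{d_K}\,h_K R_K}{w_K \sqrt{D_K}} = \frac{2^{d_K-1}\,h_K R_K}{\sqrt{D_K}},
\end{align*}
so that $R_K h_K = 2^{1-d_K}\sqrt{D_K}\,\kappa_K$. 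It therefore remains to show, for every $t > 0$,
\begin{align*}
\sqrt{D_K}\,\kappa_K \;\leqslant\; t(t+1)\bigl(\pi^{-d_K} D_K\bigr)^{(1+t)/2}\,\Gamma\!\left(\tfrac{1+t}{2}\right)^{d_K}\zeta_K(1+t).
\end{align*}

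Next I would pass to the completed Dedekind zeta function $\Lambda_K(s) := D_K^{s/2}\bigl(\pi^{-s/2}\Gamma(s/2)\bigr)^{d_K}\zeta_K(s)$. Because $\pi^{-1/2}\Gamma(1/2) = 1$, the prefactor equals $\sqrt{D_K}$ at $s = 1$, hence $\operatorname{Res}_{s=1}\Lambda_K(s) = \sqrt{D_K}\,\kappa_K$; and the right-hand side displayed above is precisely $t(t+1)\,\Lambda_K(1+t)$. So the target inequality becomes $\operatorname{Res}_{s=1}\Lambda_K(s) \leqslant t(t+1)\Lambda_K(1+t)$ for all $t > 0$, which, setting $\varphi(s) := s(s-1)\Lambda_K(s)$ and noting $\varphi(1) = \operatorname{Res}_{s=1}\Lambda_K(s)$, is equivalent to $\varphi(1) \leqslant \varphi(1+t)$ for all $t > 0$.

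For the final step I would invoke Hecke's classical integral representation of $\Lambda_K$, which is exactly the ingredient furnished by Siegel: $\Lambda_K$ extends meromorphically to $\C$ with simple poles only at $s = 0, 1$, it satisfies the functional equation $\Lambda_K(s) = \Lambda_K(1-s)$, and it can be written in the form
\begin{align*}
\Lambda_K(s) = \frac{\alpha}{s-1} - \frac{\alpha}{s} + \int_1^\infty \Phi_K(y)\bigl(y^{s/2} + y^{(1-s)/2}\bigr)\,\frac{dy}{y},
\end{align*}
where $\alpha := \operatorname{Res}_{s=1}\Lambda_K(s) = \sqrt{D_K}\,\kappa_K > 0$ and the theta kernel $\Phi_K$ is nonnegative on $[1,\infty)$. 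Multiplying through by $s(s-1)$, the two elementary terms collapse to the constant $\alpha$, giving $\varphi(s) = \alpha + s(s-1)\int_1^\infty \Phi_K(y)(y^{s/2} + y^{(1-s)/2})\,dy/y$. For real $s > 1$ one has $s(s-1) > 0$ and the integrand is positive, so $\varphi(s) \geqslant \alpha = \varphi(1)$, which is what was needed; unwinding the two reductions yields the proposition.

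The main obstacle here is not depth but care. The genuinely analytic content — meromorphic continuation, the functional equation, and above all the nonnegativity of the theta kernel in Hecke's representation — is classical and is precisely what \cite{siegel35} provides, so the statement is indeed a corollary once this is cited; what requires attention is getting every normalization right (the power of $\pi$, the factor $w_K = 2$ and hence $2^{1-d_K}$, the identity $\pi^{-1/2}\Gamma(1/2) = 1$) and verifying that after multiplication by $s(s-1)$ the polar part really does reduce to the single constant $\alpha$ with no residual linear term.
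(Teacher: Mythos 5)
Your proposal is correct and follows essentially the same route as the paper: both pass to the completed Dedekind zeta function, invoke the Hecke--Siegel integral representation with nonnegative kernel to deduce $\Lambda_K(s)\geqslant \alpha/(s(s-1))$ for real $s>1$, and convert the residue $\alpha$ into $2^{d_K-1}R_Kh_K/\sqrt{D_K}\cdot\sqrt{D_K}$ via the class number formula before substituting $s=t+1$. The only cosmetic difference is that the paper quotes Siegel's Lemma~1 in its multi-dimensional form (a sum over ideals of integrals over $\{N(\mathbf{x})\geqslant 1\}$) rather than the one-variable theta-kernel form, but the positivity argument is identical.
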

\begin{proof}
We initially define the notation
\begin{align*}
N(\textbf{x}) = x_1 x_2 \cdots x_{d_K}, \quad \quad \trace (\textbf{x}) = x_1 + x_2 + \cdots + x_{d_K}, \quad \quad \textbf{x} = (x_1, \ldots, x_{d_K}) \in \R^{d_K}.
\end{align*}
If we let $\textbf{H} = \lbr \textbf{x} \in \R^{d_K} : N(\textbf{x}) \geqslant 1 \rbr$ and $\lambda = \sqrt{D_K} \cdot \mathop{\mathrm{res}}_{s = 1} \zeta_K (s)$, we obtain from \cite[Lemma 1]{siegel35} that, for any $s \in \C$,
\begin{align*}
&\pi^{- {d_K} s/2} D_K^{s/2} \Gamma \lp \frac{s}{2} \rp^{d_K} \zeta_K (s) 
\\ &\quad \quad = \frac{\lambda}{s (s-1)} + \sum_{\mathfrak{m} \subseteq \mathcal{O}_K} \int_\textbf{H} \lp N(\textbf{x})^{s/2} + N(\textbf{x})^{(1-s)/2} \rp e^{-\pi N(\mathfrak{m})^{2/{d_K}} D_K^{-1/{d_K}} \trace(\textbf{x})} \hsp \frac{\dd x_1}{x_1} \cdots \frac{\dd x_d}{x_d},
\end{align*}
since $K$ is assumed to be totally real. \par 
The definition of $\textbf{H}$ implies that the sum over ideals $\mathfrak{m} \subseteq \mathcal{O}_K$ is positive. If we suppose that $s$ is real and satisfies $s > 1$, then we have
\begin{align}\label{ineq1}
\pi^{-{d_K} s/2} D_K^{s/2} \Gamma \lp \frac{s}{2} \rp^{d_K} \zeta_K (s) \geqslant \frac{\lambda}{s (s-1)}, \quad \quad s > 1.
\end{align}
By the class number formula \cite[Corollary 5.11]{neukirch}, we have $\lambda = 2^{{d_K}-1} R_K h_K$. The claim now follows from this by substituting $s = t + 1$ in (\ref{ineq1}), where $t > 0$ is arbitrary.
\end{proof}
In order to handle the gamma function appearing in the statement of Proposition \ref{brauersiegel}, we record the following “pointwise” version of Stirling's formula due to Robbins \cite{robbins}. Thus, if $n \geqslant 1$ is arbitrary, one has the estimates 
\begin{align}\label{pointwisestirling}
   \sqrt{2 \pi n} \lp \frac{n}{e} \rp^n e^{1/(12n + 1)} < n! < \sqrt{2 \pi n} \lp \frac{n}{e} \rp^n e^{1/12n}.
\end{align}
\par We will also need to consider the adele ring $\A_K$ of $K$. To describe this ring, we recall that a \textit{place} of $K$ is an equivalence class of valuations on $K$, where two valuations are deemed equivalent if they induce the same topology on $K$. Suppressing the formality of equivalence classes, we say that a \textit{finite} place $\upsilon$ of $K$ is such a valuation $\left| \, \cdot \, \right|_\upsilon$ which extends a $p$-adic valuation on $\Q$ for some rational prime $p$. Equivalently, the completion $K_\upsilon$ of $K$ with respect to $\upsilon$ is a finite extension of the field $\Q_p$ of $p$-adic numbers. On the other hand, an embedding of $K$ into the field $\C$ of complex numbers is an \textit{infinite} place of $K$. If the image of $K$ under this embedding is real, the corresponding place is dubbed a \textit{real} place; and if not, it is \textit{complex}. The image of $K$ is then $\R$ or $\C$, respectively. \par 
The \textit{adele ring} $\A_K$ of $K$ is the locally compact topological ring given by the restricted direct product
\begin{align*}
\A_K = \prod_{\upsilon \mid \infty} K_\upsilon \times \prodprime_{\upsilon < \infty} K_\upsilon = \lim_{\rightarrow} \A_S, \quad \quad \A_S = \prod_{\upsilon \in S} K_\upsilon \times \prod_{\upsilon \not \in S} \ringint_\upsilon,
\end{align*}
where $S$ runs over all finite subsets of places containing all the infinite places. We recall that the restricted direct product defining $\A_K$ is characterized by the fact that, as far as the coordinates belonging to the finite places are concerned, at most finitely many coordinates lie outside of the ring $\ringint_\upsilon := \ringint_{K_\upsilon}$ of integers in $K_\upsilon$. In particular, the number field $K$ embeds diagonally into $\A_K$. We refer to \cite[Chapter 1]{platrap} for more information.
\subsection{Algebraic groups}\label{subsectionalgebraicgroups}
We will now recall some standard definitions and results from the theory of linear algebraic groups (cf. also \cite[Chapters 2 and 3]{platrap}). Throughout this section, $K$ will denote a number field, and $\G$ will denote a linear algebraic group. \par 
A \textit{(linear) algebraic group} is a Zariski-closed subgroup $\G \subseteq \SL_n (\C)$ of some special linear group over the complex numbers. In other words, $\G$ can be identified with the vanishing locus of an ideal of polynomials. We say that $\G$ is \textit{defined over} $K$, or that $\G$ is an \textit{$K$-group}, if $\G$ can be defined by a set of polynomials over $K$.\par 
We say that an algebraic group $\G$ is \textit{simply connected} if any isogeny from a connected algebraic group to $\G$ is trivial. Moreover, we will call $\G$ \textit{$K$-simple} if it does not contain any non-trivial connected, closed, normal subgroups defined over $K$. In case $\G$ is defined over $K$ and simple over an algebraic closure $\overline{K}$ of $K$, we will call $\G$ \textit{absolutely simple}. \\ \par 
For $K$-groups $\G$ and $\mathbb{H}$, we say that $\Hh$ is a \textit{$K$-form} of $\G$ if there exists an isomorphism from $\G$ to $\Hh$ defined over $\overline{K}$. Two forms are said to be \textit{equivalent} if they are isomorphic over $K$. Let $E(K, \G)$ be the set of all equivalence classes of $K$-forms of $\G$. The Galois group $\mathrm{Gal} (\overline{K}/K)$ acts on the set of all isomorphisms from $\G$ to $\Hh$: If $\varphi : \G \rightarrow \Hh$ is an isomorphism and $\sigma \in \mathrm{Gal}(\overline{K}/K)$, then $\sigma. \varphi := \varphi^\sigma$ is the isomorphism obtained by applying $\sigma$ to all the coefficients of the rational functions defining $\varphi$. Let now $\varphi$ be any fixed isomorphism from $\G$ to $\Hh$. Then we obtain a map $\mathrm{Gal}(\overline{K}/K) \rightarrow \mathrm{Aut}_{\overline{K}} (\G)$ to the group of all $\overline{K}$-automorphisms of $\G$, which is given by $\sigma \mapsto \varphi^{-1} \circ \varphi^{\sigma}$. In the case when the image of this map in $\mathrm{Aut}_{\overline{K}}(\G)$ consists only of inner automorphisms of $\G$ (considered as an algebraic group over $\overline{K}$), we say that $\Hh$ is an \textit{inner form} of $\G$. If not, $\Hh$ is called an \textit{outer form} of $\G$.\par 
The group $\SP_\yoyo (\R)$ has no outer forms. Indeed, from \cite[Sect. 2.1.13]{platrap} it follows in particular that the number of isomorphism classes of outer forms of a group is equal to the number of graph automorphisms of its Dynkin diagram. Since the diagram $C_n$ has no non-trivial automorphisms, our claim follows. 
\par On the other hand, the (simply connected) inner forms of $\SP_\yoyo (\R)$ can be described in the following straightforward manner: By \cite[Prop. 2.19]{platrap} every simply connected inner form of a group of type $C_n$ is of the form $\SU_m (D, f)$ where $D$ is a central division algebra of index $2n/m \in \Z$ over $K$, equipped with an involutive antiautomorphism $\tau : D \rightarrow D$, and $f$ is a nondegenerate sesquilinear form, which is either Hermitian or skew-Hermitian (depending on the \textit{type} of $\tau$ --- see \cite[Sect. 2]{platrap}).\\ \par 
A subgroup $B \subseteq \G$ is called a \textit{Borel subgroup} if it is both connected and solvable and not properly contained in any other connected, solvable subgroup of $\G$. $\G$ itself is called \textit{quasisplit} over $K$ if it contains a Borel subgroup which is defined over $K$. An important fact is that over any field $K$, an algebraic group has a unique quasisplit inner form (cf. \cite[Prop. 7.2.12]{brianconrad}).\par  
Another distinguished type of subgroup $T \subseteq \G$ is called a \textit{torus} if it is a connected and closed subgroup of $\G$ which is diagonalizable over an algebraic closure of $K$. We say a torus is \textit{$K$-split} if is diagonalizable over $K$, and the group $\G$ is called \textit{$K$-split} if it contains a maximal $K$-split torus which is defined over $K$.\par It is well-known that if a group $\G$ is $K$-split, it is also quasisplit over $K$. In particular, for an algebraic group without outer forms, the notions of a quasisplit form and a split form coincide. \\ \par  
Let $\G$ be an algebraic group defined over $K$. Then there is a canonical way to turn $\G$ into an algebraic group defined over $\Q$, namely \textit{restriction of scalars} or \textit{Weil restriction}. \par 
Suppose that the $K$-group $\G$ equals the vanishing locus of the ideal $\langle f_1, \ldots, f_m \rangle$, where each $f_j$ is a polynomial with coefficients in $K$. Let $r_1$ and $r_2$ be the respective number of real and complex embeddings of $K$. For any such embedding $\sigma : K \hookrightarrow \C$, we then let $\G^\sigma$ be the group whose corresponding ideal is $I(\G^\sigma) = \langle \sigma \circ f_1, \ldots, \sigma \circ f_m \rangle$. Then the \textit{restriction of scalars} $\mathrm{Res}_{K/\Q}(\G)$ of $\G$ is the algebraic $\Q$-group defined by
\begin{align*}
    \mathrm{Res}_{K/\Q}(\G) = \prod_{i = 1}^{r_1 + r_2} \G^{\sigma_i}.
\end{align*}
(Strictly speaking, $\mathrm{Res}_{K/\Q}(\G)$ is an algebraic group which is isomorphic over $\overline{K}$ to the product on the right-hand side above. More specifically, the group of $\Q$-rational points of $\mathrm{Res}_{K/\Q}(\G)$ is isomorphic to the group of $K$-rational points of $\G$.) In particular, we take note of the fact that
\begin{align*}
\mathrm{Res}_{K/\Q}(\G)(\R) \simeq \prod_{\upsilon \mid \infty} \G (K_\upsilon),
\end{align*}
where $K_\upsilon$ is the completion of $K$ at the infinite place $\upsilon$. For more details, we refer to \cite[Sect. 2.1.2]{platrap}. \\ \par 
Rather than looking at rational points over a smaller field, we can also consider the group of points of $\G$ over the adele ring $\A_K$. Since the ring operations work coordinatewise, one has the explicit description
\begin{align*}
\G (\A_K ) = \prodprime_{ \upsilon } \G (K_\upsilon) = \G_\infty \times \prodprime_{\upsilon < \infty} \G (K_\upsilon).
\end{align*}
Furthermore, as a consequence of the fact that $K$ embeds diagonally into $\A_K$, the same is true for the group $\G (K)$ of $K$-rational points, which in fact embeds as a discrete subgroup of $\G (\A_K)$. We refer to \cite[Sect. I.(0.33)]{margulis} for more details. \\ \par 
As mentioned above, we know that our lattice $\Gamma$ is a maximal arithmetic subgroup of $\SP_\yoyo (\R)$. In consequence, $K$ must be a totally real number field: \par
First of all, we see from \cite[§18.5]{morris} that at least $K \subseteq \R$. Next, \cite[Corollary 5.5.16]{morris} shows that $\G (\R)$ is simple, and that the diagonal embedding $\Delta (\G (\ringint_K))$ in $\mathrm{Res}_{K/\Q}(\G)(\R)$ is therefore an \textit{irreducible} lattice due to \cite[Prop. 5.5.8 and Remark 5.5.9]{morris}. \par If we consider the projection of $\mathrm{Res}_{K/\Q}(\G)(\R)$ onto the factor corresponding to the identity embedding, we have $\pi (\Delta (\G (\ringint_K))) = \G(\ringint_K) = \Gamma$. Accordingly, if the product defining the restriction were to contain two or more non-compact factors, the projection of an irreducible lattice would (by definition) be dense in the first factor. Since $\Gamma$ is discrete, it therefore follows that $\G (\overline{\sigma(K)})$ is compact for any embedding $\sigma \neq \mathrm{id}$. \par This means, in turn, that any such $\sigma$ must be a real embedding of $K$ since, in the alternative case where $\sigma (K) \not \subseteq \R$, the group $\G  (\overline{\sigma ( K )} ) \simeq \SP_\yoyo (\C)$ is not compact. (By contrast, the explicit description of $\G$ as a special unitary group shows that for a real embedding $\sigma$, the corresponding group of points can be compact.)
\subsection{Prasad's volume formula}\label{subsectionPFV}
One of the most subtle parts of Prasad's volume formula has to do with a certain set of compact open subgroups which are intrinsically linked to the principal arithmetic subgroup under consideration. Before discussing the volume formula in detail, we will describe some generalities about these groups.
\subsubsection{Parahoric subgroups}\label{subsubsectionparahoricsubgroups}
Just as one has the notions of \textit{Borel} and \textit{parabolic} subgroups of a reductive algebraic group, one has the notions of \textit{Iwahori} and \textit{parahoric} subgroups of an algebraic group over non-archimedean local fields. \par 
As a matter of fact, the following picture provides a useful intuition: if $K_\upsilon$ is the completion of a number field $K$ at a finite place $\upsilon$, and $\ringint_\upsilon \subseteq K_\upsilon$ is the valuation ring of $K_\upsilon$ containing the unique maximal ideal $\mathfrak{m}_\upsilon = \lbr x \in \ringint_\upsilon : \upsilon (x) > 0 \rbr$, one has the usual projection map $\ringint_\upsilon / \mathfrak{m}_\upsilon \twoheadrightarrow \F_{q_\upsilon}$ to the finite field with $q_\upsilon$ elements. An Iwahori subgroup $I_\upsilon \subseteq \G (K_\upsilon)$ is then essentially the preimage $\pi^{-1} (B)$ of a Borel subgroup $B$ of $\overline{\G}$, the group $\G$ “considered” as a group over the finite field $\F_{q_\upsilon}$. (This will be made precise below.) Furthermore, a parabolic subgroup of $\overline{\G} (\F_{q_\upsilon})$ is, by definition, any group containing (a conjugate of) $B$, and a parahoric subgroup $P_\upsilon \subseteq \G (K_\upsilon)$ is then essentially the preimage $\pi^{-1}(P)$ of a parabolic subgroup. \\ \par 
We now proceed to more precise definitions. A subgroup $B_\upsilon \subseteq \G (K_\upsilon)$ is an \textit{Iwahori subgroup} if it is the normalizer of a maximal pro-$p$-subgroup of $\G (K_\upsilon)$ (that is, the inverse limit of a coherent sequence of finite $p$-groups, which is not properly contained in any other such group). A subgroup $P_\upsilon \subseteq \G (K_\upsilon)$ is called \textit{parahoric} if it contains an Iwahori subgroup.\par
We recall the following facts from \cite[Sect. 3.4]{platrap}: There is an Iwahori subgroup $B_\upsilon$ in $\G (K_\upsilon)$ and a maximal $K_\upsilon$-split torus $S_\upsilon \subseteq \G(K_\upsilon)$ (say, of dimension $\dim S_\upsilon = \ell$) such that with $N_\upsilon = (N_{\G}(S_\upsilon))(K_\upsilon)$ equal to the normalizer of $S_\upsilon$, the pair $(B_\upsilon,N_\upsilon)$ is a so-called \textit{$BN$-pair} for the group of $K_\upsilon$-rational points $\G (K_\upsilon)$. This means in particular that $H_\upsilon := B_\upsilon \cap N_\upsilon $ is a normal subgroup in $N_\upsilon$, and that there is a size $\ell + 1$ generating subset $\Delta_\upsilon = \lbr r_0, r_1, \ldots, r_\ell \rbr \subseteq W_\upsilon := N_\upsilon / H_\upsilon$ of the Weyl group $W_\upsilon$, corresponding to the vertices in the local Dynkin diagram of $\G(K_\upsilon)$, with every element of $\Delta_\upsilon$ having order $2$. It turns out that if $B_\upsilon \subseteq P_\upsilon$ for some subgroup $P_\upsilon \subseteq \G (K_\upsilon)$, then there is a subset $\Theta_\upsilon \subseteq \Delta_\upsilon$ of generators, and a resulting subgroup $W_{\Theta_\upsilon} \subseteq W_{\Delta_\upsilon} = W_\upsilon$ generated by the elements of $\Theta_\upsilon$, such that $P_\upsilon = B_\upsilon W_{\Theta_\upsilon} B_\upsilon$. The subset $\Theta_\upsilon \subseteq \Delta_\upsilon$ is then called the \textit{type} of $P_\upsilon$. 
\par From the perspective of \textit{Bruhat--Tits buildings}, a subgroup $P = P_\upsilon \subseteq \G (K_\upsilon)$ is parahoric if and only if it is the stabilizer of a simplex in the Bruhat--Tits building $\mathcal{B}(\G, K_\upsilon)$ associated with $\G (K_\upsilon)$. Naturally, the correspondence between simplices and their stabilizers is inclusion-reversing. In particular, the maximal parahoric subgroups of $\G (K_\upsilon)$ are precisely the stabilizers of individual vertices in the building $\mathcal{B}(\G, K_\upsilon)$. \par 
A particular class of parahoric subgroups will play a central role in our subsequent analysis, namely the special and hyperspecial parahorics. We say that a parahoric subgroup $P_\upsilon \subseteq \G (K_\upsilon)$ is \textit{(hyper)special} if it stabilizes a point in $\mathcal{B}(\G, K_\upsilon)$, which is \textit{(hyper)special}. Here, a point $x \in \mathcal{A} = \mathcal{A}(\G)$ in an apartment of the building is \textit{special} if every hyperplane in $\mathcal{A}$ is parallel to a hyperplane that passes through $x$ (see \cite[Prop. 10.19]{abramenkobrown}). Moreover, such a point $x$ is called \textit{hyperspecial} if it continues to be special in the Bruhat--Tits building $\mathcal{B} (\G, \widehat{K_\upsilon})$ where $\widehat{K_\upsilon}$ denotes the maximal unramified extension of $K_\upsilon$. (Note that all apartments in a building are isometric. These properties therefore do not depend on the choice of an apartment containing the given point.) \par For example, if $\G = \SP_{2n}$ is defined over $\Q$, and $\upsilon$ is a finite place corresponding to the prime number $p$, the subgroup $\SP_{2n}(\Z_p) \subseteq \SP_{2n}(\Q_p)$ is a hyperspecial parahoric subgroup.\\\\
{\textit{Remarks.}
\par \textbf{1)} We recall from \cite[Sect. 3.1]{borelprasad} that a parahoric subgroup $P_\upsilon$, which has maximal volume among all parahoric subgroups, is necessarily special; and from \cite[Sect. 3.2]{borelprasad} that a hyperspecial parahoric subgroup necessarily has maximal volume. (See also \cite[Prop. A.5]{borelprasad}.)
\par \textbf{2)} Since we want $\Gamma \subseteq \SP_{2n} (\R)$ to be of minimal covolume, we claim that we are, in fact, free to assume that at each finite place $\upsilon$, the parahoric subgroup $P_\upsilon \subseteq \G (K_\upsilon)$ has maximal volume. Namely, this follows from the following two points: First of all, the argument in \cite[Sect. 4.3]{belolipetskyemery} (which builds on \cite[Sect. 3.8]{borelprasad}) shows that the covolume of $\Gamma = N_{\G (\R)} (\Lambda)$ can only \textit{decrease} when a parahoric subgroup $P_\upsilon$ in the coherent sequence of $\Lambda$ is replaced with a parahoric of larger volume, in case $P_\upsilon$ is not already maximal. (Although the statements made in \cite{belolipetskyemery} pertain to groups of type $D_n$, the proof of the inequality after \cite[eq. 4.(15)]{belolipetskyemery} extends almost immediately to groups of type $C_n$ as well, with only minor adjustments required.) Second, we will show that the covolume \textit{must, in fact, decrease} in this situation. This argument will rely on the analysis that follows (where every $P_\upsilon$ is assumed to be maximal) and, in particular, on Theorem \ref{prasadinourspecialcase} below, which we have not yet discussed. For this reason, we postpone the justification of this claim to Section \ref{sectionlocalcontributions} where the result is stated in the form of Lemma \ref{newlemma_covolume_must_decrease}.  \\ 
\par Assume from now on that each $P_\upsilon$ is special.} This fact has at least the following useful consequences. First of all, the type $\Theta_\upsilon$ of any parahoric $P_\upsilon$ in our sequence has no symmetries, as is shown in \cite[Sect. 3.1]{emerykim}. Second, the following simple characterization of hyperspecial parahoric subgroups holds: for a finite place $\upsilon$,
\begin{align}\label{09012025-convenient}
\text{$P_\upsilon$ is hyperspecial } \Longleftrightarrow \text{ $\G$ splits over $K_\upsilon$.}
\end{align}
Indeed, it follows immediately from the definition that $P_\upsilon$ is hyperspecial if $\G$ splits over $K_\upsilon$. Conversely, if $P_\upsilon \subseteq \G (K_\upsilon)$ is hyperspecial, then $\G$ must be quasisplit over $K_\upsilon$ by \cite[Prop. 10.2.1]{kalethaprasad}. Accordingly, $\G$ splits over $K_\upsilon$ by uniqueness of the quasisplit inner form over any field. \\ \par 
We now proceed to the discussion of parahoric subgroups in the context of Prasad's volume formula. Additional details are given in \cite[Sect. 2.2]{prasad} and \cite[Sect. 4.1]{kalethaprasad}. \par 
A parahoric subgroup $P_\upsilon$ is related to a smooth affine $\ringint_\upsilon$-group scheme $\textbf{G}_\upsilon$. In particular, the group $P_\upsilon$ coincides with the $\ringint_\upsilon$-points of this scheme, i.e. $\textbf{G}_\upsilon ( \ringint_\upsilon) = P_\upsilon$. Therefore one may consider the reduction (mod $\mathfrak{m}_\upsilon$) of $P_\upsilon$ in the form of the base change $\overline{\textbf{G}}_\upsilon := \textbf{G}_\upsilon \times_{\ringint_\upsilon} \F_{q_\upsilon}$. (Intuitively speaking, the resulting group of points simply consists of the points of $\textbf{G}_\upsilon$ where all of its defining equations have been reduced modulo $q_\upsilon$, over the finite field $\F_{q_\upsilon}$.) The group $\overline{\textbf{G}}_\upsilon (\F_{q_\upsilon})$ then admits a Levi decomposition $\textbf{M}_\upsilon \ltimes \mathrm{R}_u (\overline{\textbf{G}}_\upsilon (\F_{q_\upsilon}))$ where $\textbf{M}_\upsilon \subseteq \overline{\textbf{G}}_\upsilon (\F_{q_\upsilon})$ is a maximal connected reductive subgroup (the \textit{Levi component}), and $\mathrm{R}_u (\overline{\textbf{G}}_\upsilon (\F_{q_\upsilon}))$ denotes the unipotent radical. \par 
If we now let $\G$ be a $K$-form of $\SP_\yoyo$ and let $\mathcal{G}$ be the unique quasisplit inner $K$-form of $\G$, we obtain in an analogous way the maximal connected reductive subgroup $\mathcal{M}_\upsilon \subseteq \overline{\mathcal{G}}_\upsilon (\F_{q_\upsilon})$. Since $\SP_\yoyo$ is itself split, we have in fact $\mathcal{G} = \SP_\yoyo$ and $\mathcal{M}_\upsilon = \SP_\yoyo (\F_{q_\upsilon})$.
\par With these details in place, we can now describe the local factors appearing in Prasad's volume formula. Thus, we define
\begin{align}\label{eNOstrich}
e (P_\upsilon) := \frac{q_{v}^{(\dim \textbf{M}_\upsilon \, + \, \dim \mathcal{M}_\upsilon)/2}}{\# \textbf{M}_\upsilon (\F_{q_\upsilon})},
\end{align}
and 
\begin{align}\label{estrich} 
e' (P_\upsilon) := q_\upsilon^{(\dim \textbf{M}_\upsilon - \dim \mathcal{M}_\upsilon)/2} \frac{\# \mathcal{M}_\upsilon (\F_{q_\upsilon})}{\# \textbf{M}_\upsilon (\F_{q_\upsilon})} = e(P_\upsilon) \frac{\# \mathcal{M}_\upsilon (\F_{q_\upsilon})}{q_\upsilon^{\dim \mathcal{M}_\upsilon}}.
\end{align}
We note that $e' (P_\upsilon)$ is always a non-negative integer (cf. \cite[Sect. 2.5]{prasadyeung}). \par 
Of course, it is possible to simplify both of these expressions to some extent by using the explicit description of $\mathcal{M}_\upsilon$ as the standard split form of the symplectic group over the residue field $\F_{q_\upsilon}$. We will do this in the next section. For now, we note (cf. \cite[Sect. 2.5]{prasadyeung}) that one always has the inequality
\begin{align}\label{evsestrich}
e'(P_\upsilon) < e(P_\upsilon).
\end{align}
For later use, we also note the following equivalences (cf. \cite[Sect. 2.2]{prasad}):
\begin{align*}
    e'(P_\upsilon) = 1 &\Longleftrightarrow  \dim \overline{M}_\upsilon = \dim \overline{\mathcal{M}}_\upsilon \text{ and } \# \overline{M}_\upsilon (\mathfrak{f}_\upsilon) = \# \overline{\mathcal{M}}_\upsilon ( \mathfrak{f}_\upsilon) \\ &\Longleftrightarrow \text{$P_\upsilon$ is hyperspecial.}
\end{align*}
\subsubsection{The statement of the volume formula}
With the necessary preparations in place, we will now describe the main technical tool used in the paper, which is Prasad's voluma formula. In our specific case of interest where $\G$ has type $C_n$ and $K$ is totally real (see the end of Section \ref{subsectionalgebraicgroups}), this formula can be stated as follows. For both this result and future purposes, it will be useful to define (as we did implicitly in the introduction) the number
\begin{align}\label{piofnisgivenby}
    \Pi (n) := \prod_{j = 1}^n \frac{(2j-1)!}{(2 \pi)^{2j}}.
\end{align}
\begin{theorem}[{\cite[Thm. 3.7]{prasad}}]\label{prasadinourspecialcase}
Let $\G$ be an absolutely simple, simply connected algebraic group of rank $n$ and type $C_n$ defined over a totally real number field $K$. Let $\mu_\infty$ denote the product measure on $\G_\infty$, where (for each infinite place $\upsilon$) the measure $\mu_\upsilon$ on $\G (K_\upsilon)$ is described in \cite[Sect. 1.3, Sect. 1.4]{prasad}. Let $\Lambda$ be the principal arithmetic subgroup determined by a coherent collection $\lbr P_\upsilon : \upsilon < \infty \rbr$ of parahoric subgroups $P_\upsilon \subseteq \G (K_\upsilon)$. Then we have the formula
\begin{align*}
\mu_\infty (\G_\infty / \Lambda ) = D_K^{n(2n+1)/2} \Pi (n)^{d_K} \! \prod_{j = 1}^{n} \zeta_K (2j) \prod_{\upsilon < \infty} e'(P_\upsilon),
\end{align*}
where the factors $e'(P_\upsilon)$ are given by \eqref{estrich}, and $\zeta_K$ denotes the Dedekind zeta function of $K$.
\end{theorem}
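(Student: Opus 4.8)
The plan is to obtain this statement as a direct specialization of Prasad's volume formula in full generality \cite[Thm. 3.7]{prasad}. That formula applies to any absolutely simple, simply connected group $G$ over a number field $K$, relative to a finite set $S$ of places at which $G$ is isotropic; here one takes $S$ to be the set of archimedean places of $K$, so that $\G_\infty = \prod_{\upsilon\mid\infty}\G(K_\upsilon)$ plays the role of $G_S$ and the product over the finite places $\upsilon\notin S$ is exactly $\prod_{\upsilon<\infty}$. The hypotheses hold: $\G$ is absolutely simple and simply connected by assumption, $K$ is totally real as forced in Section \ref{subsectionalgebraicgroups}, and $S$-isotropy holds since $\G$ splits over $K_{\upsilon_0}$ and hence has positive $K_{\upsilon_0}$-rank. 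In this generality the formula expresses $\mu_\infty(\G_\infty/\Lambda)$ as a product of: a power $D_K^{\dim\G/2}$ of the discriminant; an archimedean factor $\bigl(\prod_{i=1}^{n}\tfrac{m_i!}{(2\pi)^{m_i+1}}\bigr)^{d_K}$ built from the exponents $m_1,\dots,m_n$ of the (absolute) Weyl group of $\G$; a factor $\bigl(D_\ell/D_K^{[\ell:K]}\bigr)^{c}$, with $c = c(\G)\geqslant 0$ a combinatorial exponent, recording the minimal splitting field $\ell$ of the quasisplit inner $K$-form $\mathcal G$; the Tamagawa number $\tau_K(\G)$; a product $\prod_{i=1}^{n}\zeta_K(m_i+1)$ of special values of $\zeta_K$ (replaced in general by suitable Artin $L$-functions of $\ell/K$); and the local correction factors $\prod_{\upsilon<\infty}e'(P_\upsilon)$ of \eqref{estrich}.

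The second step is to evaluate these factors when $\G$ has type $C_n$ and rank $n$. One has $\dim\G = \dim\SP_\yoyo = n(2n+1)$, which gives the exponent $n(2n+1)/2$ on $D_K$. The Weyl group of type $C_n$ has exponents $m_i = 2i-1$ for $i = 1,\dots,n$, so $m_i+1 = 2i$; hence $\bigl(\prod_{i=1}^{n}\tfrac{m_i!}{(2\pi)^{m_i+1}}\bigr)^{d_K} = \bigl(\prod_{i=1}^{n}\tfrac{(2i-1)!}{(2\pi)^{2i}}\bigr)^{d_K} = \Pi(n)^{d_K}$ by \eqref{piofnisgivenby}, and $\prod_{i=1}^{n}\zeta_K(m_i+1) = \prod_{j=1}^{n}\zeta_K(2j)$, once one knows there is no $L$-function twisting.

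Third, the remaining global correction factors are trivial in our setting. The Dynkin diagram $C_n$ has no nontrivial automorphism, so $\SP_\yoyo$ has no outer forms and the quasisplit inner $K$-form $\mathcal G$ is the split group $\SP_\yoyo$ itself, as already observed in Section \ref{subsectionalgebraicgroups}; hence its minimal splitting field is $\ell = K$, so $D_\ell/D_K^{[\ell:K]} = D_K/D_K = 1$ and $\bigl(D_\ell/D_K^{[\ell:K]}\bigr)^{c} = 1$ regardless of $c$, and the $L$-functions appearing in the formula are the untwisted values $\zeta_K(2j)$ used above. Moreover $\tau_K(\G) = 1$ since $\G$ is simply connected (the Weil conjecture on Tamagawa numbers). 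Substituting all of this collapses Prasad's formula to
\begin{align*}
\mu_\infty(\G_\infty/\Lambda) = D_K^{n(2n+1)/2}\,\Pi(n)^{d_K}\prod_{j=1}^{n}\zeta_K(2j)\prod_{\upsilon<\infty}e'(P_\upsilon),
\end{align*}
which is the claimed identity.

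Since the theorem is essentially a transcription of a published formula, there is no single hard computation; the step needing the most attention is the bookkeeping — matching Prasad's fixed normalization of the archimedean measures $\mu_\upsilon$ against the combinatorial invariants of $C_n$, and checking that every additional term in Prasad's general formula (the splitting-field discriminant, the Tamagawa factor, and the possible $L$-function twisting) genuinely collapses here. Each of those collapses rests on the one structural input that $C_n$ coincides with its own quasisplit form, which has already been established.
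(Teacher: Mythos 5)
Your proposal is correct and follows essentially the same route as the paper: specialize Prasad's general formula, insert the exponents $m_i = 2i-1$ of type $C_n$ to get $\dim\G = n(2n+1)$ and the factor $\Pi(n)^{d_K}\prod_j\zeta_K(2j)$, observe that $C_n$ has no outer forms so $\mathcal G$ is split, $\ell=K$, and the splitting-field discriminant factor is trivial, and invoke $\tau_K(\G)=1$. The only cosmetic difference is that the paper makes explicit the conversion between $e(P_\upsilon)$ and $e'(P_\upsilon)$ via \eqref{campusfestival} to account for where the zeta values come from, which you absorb into your statement of the general formula.
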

\noindent \textit{Remark.} The measure $\mu_\infty$ on $\G_\infty$ is normalized to give any maximal compact subgroup measure $1$.  In particular, if $\G (K_\upsilon)$ is compact for any infinite place $\upsilon \neq \upsilon_0$, the left-hand side of Theorem \ref{prasadinourspecialcase} is $\mu (\G (K_{\upsilon_0}) / \Lambda )$.
\begin{proof}
Let $\upsilon$ be any finite place of $K$. Given that the exponents $m_i$ of $\G$ are $m_i = 2i-1$ for $i = 1, \ldots, n$ (see \cite[Sect. 2.4]{prasadyeung}), we observe that
\begin{align*}
\dim \G = n + 2(1 + 3 + \cdots + 2n-1) = 2n^2 + n,
\end{align*}
and, to justify the appearance of the Dedekind zeta values, that
\begin{align}\label{campusfestival}
e' (P_\upsilon) = e( P_\upsilon) \prod_{j = 1}^n \lp 1 - \frac{1}{q_\upsilon^{m_j + 1}} \rp = e(P_\upsilon) \prod_{j = 1}^n \lp 1 - \frac{1}{q_\upsilon^{2j}} \rp,
\end{align}
where $q_\upsilon = \# \mathfrak{f}_\upsilon = N(\mathfrak{p}_\upsilon)$ denotes the size of the residue field $\mathfrak{f}_\upsilon$ of $K$ at $\upsilon$ (or, equivalently, the norm of the prime ideal $\mathfrak{p}_\upsilon \subseteq \ringint_{K}$ associated with $\upsilon$). Indeed, this follows from \cite[Sect. 2.4]{prasadyeung} and the fact that $\G$ is of type $C_n$ and thus without outer forms. Upon multiplying the right-hand side above over all finite places $\upsilon$ of $K$, one recognizes the reciprocal of the Euler product defining $\zeta_K (2) \cdots \zeta_K (2n)$.\par 
As noted above, the quasisplit inner $K$-form $\mathcal{G}$ of $\G$ splits over $K$. Therefore $L = K$, and the factor involving $D_L / D_K^{\lb L:K \rb}$ vanishes. Moreover, since the number field $K$ is totally real, its number of (inequivalent) infinite places equals $\lb K : \Q \rb = d_K$, and hence the product over the infinite places of $K$ can be rewritten as claimed (cf. also \cite[Rem. 3.8]{prasad}). The claimed formula now follows from these observations and the fact that the Tamagawa number $\tau_K (\G)$ is equal to $1$ (see \cite{kottwitz-tamagawa}).
\end{proof}
Since our approach to using Prasad's Volume Formula involves the initial step of ignoring the contribution of the parahoric factors $e'(P_\upsilon)$ to the covolume $\mu (\G (K_{\upsilon_0} / \Lambda)$, it will be convenient for us to introduce the notation
\begin{align}\label{covolumewithoutparahorics}
S(\Lambda) := \mu (\G (K_{\upsilon_0}) / \Lambda ) \lp \prod_{\upsilon < \infty} e'(P_\upsilon) \rp^{-1} = D_K^{n(2n+1)/2} \Pi (n)^{d_K} \! \prod_{j = 1}^{n} \zeta_K (2j).
\end{align}
In terms of this number, the covolume of $\Gamma$ can be expressed as
\begin{align}\label{08012025-gammacovolume}
\mu \lp \G (K_{\upsilon_0}) / \Gamma \rp = \frac{1}{\lb \Gamma : \Lambda \rb} S(\Lambda)\prod_{\upsilon < \infty} e' (P_\upsilon).
\end{align}
\section{Bounding the Covolume: Global considerations}\label{SectionBoundingthecovol}
In this section we estimate the right-hand side of Prasad's volume formula from below and to different degrees of accuracy. As we described in the introduction, the assumption that $\Gamma$ has smaller covolume than $\Gamma_0$ will, in combination with the simplest bounds we prove, result in a number of numerical bounds on the arithmetic invariants related to our number field, e.g. $D_K$ and $d_K$. Since many number fields are thus excluded from our list of candidates, we can use progressively finer (and more involved) bounds to rule out even more number fields and trim the list even further.
\subsection{Lower bounds in terms of $d_K$, $D_K$, and $n$}
By Theorem \ref{prasadinourspecialcase} and the discussion at the end of Section \ref{subsubsectionparahoricsubgroups}, we have the lower bound $\mu (\G (K_{\upsilon_0}) / \Lambda ) \geqslant S(\Lambda)$. We begin this subsection by converting this fact into a lower bound on the covolume of $\Gamma$, making use of the different number theoretic estimates mentioned in Subsection \ref{subsectionnumberfields}. 
\begin{lemma}\label{covolprotobound}
    Let $K$ be a totally real number field of degree $\lb K : \Q \rb = d_K$ and class number $h_K$, and let $D_K$ be the absolute value of the discriminant of $K$ over $\Q$. Then the covolume of $\Gamma$ satisfies
    \begin{align*}
        \mu (\G (K_{\upsilon_0}) / \Gamma) \geqslant \frac{S(\Lambda)}{\lb U_K^+ : U_K^2 \rb \cdot h_K} \geqslant \frac{S(\Lambda)}{2^{2d_K-1} h_K},
    \end{align*}
where the subgroups $U_K^+, \, U_K^2 \subseteq \ringint_K^\times$ are defined in Subsection \ref{subsectionnumberfields}.
\end{lemma}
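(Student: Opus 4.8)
The plan is to bound the index $\lb \Gamma : \Lambda \rb$ from above by controlling the quotient that arises when one passes from the principal arithmetic subgroup $\Lambda$ to its normalizer $\Gamma = N_{\G(\R)}(\Lambda)$. Concretely, the combination of equation \eqref{08012025-gammacovolume} with the inequality $e'(P_\upsilon) \geqslant 1$ (which holds since each $e'(P_\upsilon)$ is a non-negative integer, by the remark after \eqref{estrich}, and equals $1$ exactly when $P_\upsilon$ is hyperspecial) immediately gives
\begin{align*}
\mu\lp \G(K_{\upsilon_0})/\Gamma \rp \;\geqslant\; \frac{S(\Lambda)}{\lb \Gamma : \Lambda \rb}.
\end{align*}
So the whole statement reduces to establishing $\lb \Gamma : \Lambda \rb \leqslant \lb U_K^+ : U_K^2 \rb \cdot h_K$, and then the second inequality is just the bound \eqref{totallypositivemodsquares}, i.e. $\lb U_K^+ : U_K^2 \rb \leqslant 2^{2d_K - 1}$.

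For the index bound itself I would invoke the Galois-cohomological estimate that is flagged in the excerpt as \eqref{indexboundgaloiscohomology} (and is the standard tool from \cite[Sect. 2 and Prop. 2.9]{borelprasad}): the quotient $\Gamma/\Lambda$ embeds into a subquotient of the first Galois cohomology $H^1(K, \mathbf{C})$ of the center $\mathbf{C}$ of (the simply connected group) $\G$, and for type $C_n$ the center is $\mu_2$. The relevant group-cohomological object then sits inside the "$2$-part" of an arithmetic quantity built from the class group and the units: more precisely one gets a bound of the shape $\lb \Gamma : \Lambda \rb \leqslant h_{K,2} \cdot \lb U_K^+ : U_K^2 \rb \cdot (\text{local factors})$, where the local factors are trivial because we have arranged (Remark 2 in Section \ref{subsubsectionparahoricsubgroups}) that every $P_\upsilon$ has maximal volume, hence is special, and at the places where $\G$ splits it is hyperspecial. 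Since $h_{K,2} \leqslant h_K$, collecting everything yields exactly $\lb \Gamma : \Lambda \rb \leqslant \lb U_K^+ : U_K^2 \rb \cdot h_K$, which is what we need.

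The step I expect to be the main obstacle — or at least the one requiring the most care — is pinning down precisely which subquotient of $H^1(K,\mu_2)$ governs $\Gamma/\Lambda$ and checking that the local contributions really do vanish under our maximality assumption on the $P_\upsilon$. One has to be careful that "maximal volume parahoric" gives hyperspecial only at the places where $\G$ actually splits over $K_\upsilon$ (using \eqref{09012025-convenient}), and at the ramified places one must verify that the local index contribution is still absorbed into the global unit/class-number factor rather than producing an extra prime. In the totally real type $C_n$ setting this is exactly the computation carried out in \cite[Sect. 2]{borelprasad}, specialized to center $\mu_2$, so the argument is to cite that framework and then specialize; the only genuinely new input is the uniform crude bound $\lb U_K^+ : U_K^2 \rb \leqslant 2^{2d_K-1}$ from \eqref{totallypositivemodsquares}, which turns the cohomological bound into the clean numerical form stated in the lemma.
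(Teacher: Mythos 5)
Your reduction has a genuine gap at the key step. You discard all the parahoric factors via $e'(P_\upsilon)\geqslant 1$ and then try to prove the clean index bound $\lb \Gamma : \Lambda \rb \leqslant \lb U_K^+ : U_K^2 \rb \cdot h_K$ by asserting that the local contributions to the Galois-cohomological estimate vanish under the maximal-volume assumption. That assertion is false at exactly the places that matter: at a finite place $\upsilon$ where $\G$ does not split over $K_\upsilon$, the maximal-volume parahoric is special but \emph{not} hyperspecial, and each such place contributes a factor of $2$ to the index bound. The estimate actually available (the paper's \eqref{indexboundgaloiscohomology}, from \cite[Lemma 6.3]{emerykim}) is
\begin{align*}
\lb \Gamma : \Lambda \rb \;\leqslant\; h_K \, 2^{\# \mathcal{T}} \, \lb U_K^+ : U_K^2 \rb,
\end{align*}
where $\mathcal{T}$ is the set of finite places at which $P_\upsilon$ is not hyperspecial. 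With your crude lower bound $\prod_\upsilon e'(P_\upsilon)\geqslant 1$, this only gives $\mu(\G(K_{\upsilon_0})/\Gamma)\geqslant S(\Lambda)\,2^{-\#\mathcal{T}}\,h_K^{-1}\lb U_K^+:U_K^2\rb^{-1}$, which is weaker than the lemma whenever $\mathcal{T}\neq\emptyset$.

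The fix — and the point of the paper's argument — is that you must \emph{not} throw away the parahoric factors before using the index bound. For $\upsilon\in\mathcal{T}$ the factor $e'(P_\upsilon)$ is an integer strictly greater than $1$ (by \eqref{evsestrich} and the fact that $e'(P_\upsilon)=1$ iff $P_\upsilon$ is hyperspecial), hence $e'(P_\upsilon)\geqslant 2$, so $\prod_{\upsilon\in\mathcal{T}} e'(P_\upsilon)\geqslant 2^{\#\mathcal{T}}$. This contribution exactly cancels the $2^{\#\mathcal{T}}$ in the index bound, yielding the stated inequality. Your second inequality (applying \eqref{totallypositivemodsquares}) is fine, but the first one does not follow by the route you propose.
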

\begin{proof}
    Let $\mathcal{T}$ denote the set of finite places $\upsilon$ of $K$ with the property that $P_\upsilon$ is \textit{not} hyperspecial. (Equivalently, by \eqref{09012025-convenient}, $\mathcal{T}$ is the set of all $\upsilon$ such that $\G$ does not split over $K_\upsilon$.) From the discussion in Section \ref{subsubsectionparahoricsubgroups} we know that the types $\Theta_\upsilon$ of the parahorics $P_\upsilon$ are without symmetries. By \cite[Lemma 6.3]{emerykim} we then have the bound
    \begin{align}\label{indexboundgaloiscohomology}
    \lb \Gamma : \Lambda \rb \leqslant h_K 2^{\# \mathcal{T}} \lb U_K^+ : U_K^2 \rb \leqslant h_K 2^{2d_K - 1 + \# \mathcal{T}},
    \end{align}
    where the last inequality is \eqref{totallypositivemodsquares}. To derive our claim from Theorem \ref{prasadinourspecialcase}, we now write
    \begin{align*}
    \mu (\G (K_{\upsilon_0}) / \Lambda ) = S(\Lambda ) \, \prod_{\upsilon \in \mathcal{T}} e'(P_\upsilon) \prod_{\upsilon \not \in \mathcal{T}} e'(P_\upsilon).
    \end{align*}
    From our discussion in Section \ref{subsubsectionparahoricsubgroups} we also see that the product over $\upsilon \not \in \mathcal{T}$ equals $1$, and that each factor in the other product is an integer $e'(P_\upsilon)$ strictly greater than $1$. In combination with the first inequality in \eqref{indexboundgaloiscohomology}, this shows that
    \begin{align*}
    \mu (\G (K_{\upsilon_0}) / \Gamma ) &= \mu (\G (K_{\upsilon_0}) / \Lambda ) \lb \Gamma : \Lambda \rb^{-1} \\
    &\geqslant S(\Lambda) 2^{\# \mathcal{T}} h_K^{-1} 2^{- \# \mathcal{T}} \lb U_K^+ : U_K^2 \rb^{-1} \\ 
    &= S(\Lambda) h_K^{-1} \lb U_K^+ : U_K^2 \rb^{-1}.
    \end{align*}
    This proves the first bound. The second bound is proved analogously with the second inequality in \eqref{indexboundgaloiscohomology}.
\end{proof}
\noindent \textit{Remarks.} 
\par \textbf{1)} We should point out that, although the result \cite[Lemma 6.3]{emerykim} is stated and proved in the context of a different real form of the symplectic group, its proof makes no special use of the structure of ${\bf G}$ at the real places of the relevant number field $k$ (in the notation of \cite{emerykim}). Rather, the argument only relies on the structure at the non-archimedean places. As such, all types of parahorics in $\G (K_\upsilon)$ can occur within the framework of \cite[Lemma 6.3]{emerykim}, and its analysis is therefore exhaustive in our situation as well.
\par \textbf{2)} In order to keep things as simple as possible, we will mainly use the weaker of the two estimates in Lemma \ref{covolprotobound} in the sequel, as it has the benefit that its right-hand side depends only on $d_K$ and $h_K$ and not on the unit groups. On the other hand, since the index $\lb U_K^+ : U_K^2 \rb$ can be computed rather easily for many particular number fields, we will make use of the full strength of the lemma once our other analyses have identified a small list of candidates for $K$.
\par {\textbf{3)} For the convenience of readers unfamiliar with Galois cohomology, we wish to mention the paper \cite{newallan} where a version of the bound \eqref{indexboundgaloiscohomology} is derived in the special case $h_K = 1$ using other techniques than those in \cite{emerykim} and \cite{borelprasad}. In particular, it is demonstrated here how a change in the involved parahoric subgroups can change the index $\lb \Gamma : \Lambda \rb$.}

\begin{corollary}\label{protodbound}
    Suppose that $\mu \lp \G (K_{\upsilon_0}) / \Gamma \rp \leqslant \Psi(n)$ where $\Psi(n)$ is defined in \eqref{referencecovolume}. Then the discriminant of $K$ satisfies the bound
    \begin{align}\label{protoD}
     D_K < \lp 0.915 \cdot 2^{2d_K} h_K \Pi(n)^{1-d_K} \rp^{1/(n^2 + n/2)}.
\end{align}
\end{corollary}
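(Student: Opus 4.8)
The plan is to derive the discriminant bound \eqref{protoD} directly from the weaker estimate in Lemma \ref{covolprotobound} together with the hypothesis $\mu(\G(K_{\upsilon_0})/\Gamma) \leqslant \Psi(n)$. First I would chain the two inequalities: the hypothesis gives $\mu(\G(K_{\upsilon_0})/\Gamma) \leqslant \Psi(n)$, while Lemma \ref{covolprotobound} gives $\mu(\G(K_{\upsilon_0})/\Gamma) \geqslant S(\Lambda)/(2^{2d_K-1} h_K)$, so combining these yields
\begin{align*}
S(\Lambda) \leqslant 2^{2d_K - 1} h_K \Psi(n).
\end{align*}
Now I substitute the explicit forms: $S(\Lambda) = D_K^{n(2n+1)/2} \Pi(n)^{d_K} \prod_{j=1}^n \zeta_K(2j)$ from \eqref{covolumewithoutparahorics}, and $\Psi(n) = \Pi(n) \prod_{j=1}^n \zeta(2j)$ from \eqref{referencecovolume}. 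Note that $n(2n+1)/2 = n^2 + n/2$, which is the exponent appearing in \eqref{protoD}.

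Next I would handle the zeta-value factors. Since $K$ is totally real of degree $d_K$, the Dedekind zeta function factors over the real places and, more simply, each $\zeta_K(2j) = \prod_{\mathfrak{p}} (1 - N\mathfrak{p}^{-2j})^{-1} \geqslant 1$ because every Euler factor exceeds $1$; likewise each $\zeta(2j) \geqslant 1$. The cleanest route is to bound $\prod_{j=1}^n \zeta_K(2j) \geqslant 1$ on the left and to \emph{bound} $\prod_{j=1}^n \zeta(2j)$ on the right by an explicit constant. The largest term is $\zeta(2) = \pi^2/6 \approx 1.6449$, and $\prod_{j \geqslant 1} \zeta(2j)$ converges to roughly $1.82$; so $\prod_{j=1}^n \zeta(2j) < 1/0.915 \approx 1.0929$ would be \emph{false} — rather, the constant $0.915$ must be playing the role of a \emph{lower} bound on something. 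Reading \eqref{protoD} again: the factor is $0.915$, not $1/0.915$, so in fact the inequality I want is $\prod_{j=1}^n \zeta(2j) \cdot (\text{something}) \leqslant \ldots$; more carefully, after rearranging $D_K^{n^2+n/2} \leqslant 2^{2d_K-1} h_K \Pi(n)^{1-d_K} \prod_{j=1}^n \zeta(2j) / \prod_{j=1}^n \zeta_K(2j) \leqslant 2^{2d_K-1} h_K \Pi(n)^{1-d_K} \prod_{j=1}^n \zeta(2j)$, and then I would use $2^{2d_K-1} = \tfrac12 \cdot 2^{2d_K}$ together with the numerical fact that $\tfrac12 \prod_{j=1}^\infty \zeta(2j) < 0.915$ (indeed $\tfrac12 \cdot 1.82 = 0.91 < 0.915$). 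Taking the $(n^2+n/2)$-th root of both sides then gives exactly \eqref{protoD}.

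The only genuinely non-routine point is verifying the numerical constant: I must confirm that $\tfrac{1}{2}\prod_{j=1}^{\infty}\zeta(2j) \leqslant 0.915$, which follows from estimating the tail $\prod_{j \geqslant 2}\zeta(2j)$ by, say, $\exp\!\big(\sum_{j \geqslant 2}(\zeta(2j)-1)\big)$ and bounding $\sum_{j\geqslant 2}(\zeta(2j)-1) = \sum_{m\geqslant 2} 1/(m^2-1) - \text{(odd terms)}$ crudely; this gives $\tfrac{1}{2}\zeta(2)\prod_{j\geqslant 2}\zeta(2j) < \tfrac{1}{2}\cdot 1.645 \cdot 1.11 < 0.915$. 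Everything else is the bookkeeping of substituting \eqref{covolumewithoutparahorics} and \eqref{referencecovolume} into $S(\Lambda) \leqslant 2^{2d_K-1}h_K\Psi(n)$, discarding $\prod_{j=1}^n \zeta_K(2j) \geqslant 1$, and extracting the root. I expect no real obstacle; the lemma does all the structural work, and this corollary is essentially an algebraic rearrangement plus one convergent-product estimate.
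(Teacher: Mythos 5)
Your proposal is correct and follows essentially the same route as the paper: chain the hypothesis with the weaker bound of Lemma \ref{covolprotobound} to get $S(\Lambda) \leqslant 2^{2d_K-1}h_K\Psi(n)$, discard $\prod_{j=1}^n \zeta_K(2j) \geqslant 1$, and absorb $\tfrac12\prod_{j=1}^{\infty}\zeta(2j)$ into the constant $0.915$ (the paper quotes the bound $\prod_{j\geqslant 1}\zeta(2j) < 1.83$ from Kellner, whence $1.83/2 = 0.915$, whereas you verify the equivalent numerical fact directly). The only cosmetic difference is that you re-derive the convergent-product estimate by hand rather than citing it.
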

\begin{proof}
    This follows from straightforward manipulations of the weaker estimate given in the lemma. We only need the additional observation that
    \begin{align}\label{09012025-numericalupperbound}
        \Psi(n) < \Pi (n) \prod_{j = 1}^\infty \zeta(2j) < 1.83 \cdot \Pi(n),
    \end{align}
where the numerical bound for the infinite product follows from \cite[Lemma 1]{kellner-zeta}.
\end{proof}
\begin{lemma}\label{LLLemmaxyx}
    The covolume $\mu \lp \G (K_{\upsilon_0}) / \Gamma \rp$ satisfies the bound
\begin{align}
 \mu \lp \G (K_{\upsilon_0}) / \Gamma \rp > F (d_K, D_K, n) := \frac{1}{750} D_K^{n^2 + n/2 - 3} \lp 7.6  e^{0.46} \Pi(n) \rp^{d_K}. \label{covolest04}
\end{align}
\end{lemma}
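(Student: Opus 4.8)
The plan is to feed the weaker estimate of Lemma~\ref{covolprotobound} into an explicit upper bound on the class number $h_K$ coming from Proposition~\ref{brauersiegel} and Zimmert's regulator bound~\eqref{zimmert}. Lemma~\ref{covolprotobound} gives
\[
\mu\lp\G(K_{\upsilon_0})/\Gamma\rp \geqslant \frac{S(\Lambda)}{2^{2d_K-1}h_K},
\]
and by the definition~\eqref{covolumewithoutparahorics} of $S(\Lambda)$, together with $n(2n+1)/2 = n^2+n/2$ and the elementary fact that $\prod_{j=1}^n\zeta_K(2j) > 1$ (each Euler product over a number field exceeds $1$), one has $S(\Lambda) > D_K^{n^2+n/2}\Pi(n)^{d_K}$. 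So everything hinges on producing an upper bound for $h_K$ whose dependence on $D_K$ is $D_K^{3}$ and whose $d_K$-th-power part is comparable to $(\pi^3/(945\,e^{0.46}))^{d_K}$.

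To this end I would apply Proposition~\ref{brauersiegel} with the choice $t = 5$; this value is dictated by the fact that $(1+t)/2 = 3$, which is exactly what makes the exponent of $D_K$ come out to $n^2+n/2-3$ and also provides the convenient evaluations $\Gamma((1+t)/2) = \Gamma(3) = 2$ and $\zeta_K(1+t) = \zeta_K(6)$. Bounding $\zeta_K(6) \leqslant \zeta(6)^{d_K} = (\pi^6/945)^{d_K}$ — which follows by dominating, one rational prime $p$ at a time, the local Euler factors of $\zeta_K$ above $p$ by $(1-p^{-6})^{-d_K}$ — the proposition simplifies to $R_K h_K \leqslant 60\,D_K^{3}(\pi^3/945)^{d_K}$. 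Dividing by $R_K$ and inserting Zimmert's bound $R_K \geqslant 0.04\,e^{0.46\,d_K}$ then yields $h_K \leqslant 1500\,D_K^{3}\lp\pi^3/(945\,e^{0.46})\rp^{d_K}$.

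Substituting this back into the estimate from Lemma~\ref{covolprotobound} and collecting terms — the powers of $2$ via $2^{2d_K-1}\cdot 1500 = 750\cdot 4^{d_K}$, the discriminant via $D_K^{n^2+n/2}/D_K^3$, and the $d_K$-th powers — produces
\[
\mu\lp\G(K_{\upsilon_0})/\Gamma\rp > \frac{1}{750}\,D_K^{n^2+n/2-3}\lp\frac{945}{4\pi^3}\,e^{0.46}\,\Pi(n)\rp^{d_K},
\]
and since $945/(4\pi^3) = 7.619\ldots > 7.6$, the right-hand side dominates $F(d_K,D_K,n)$; the inequality is strict thanks to the strict bound $\prod_j\zeta_K(2j) > 1$. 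For $n\geqslant 2$ the exponent $n^2+n/2-3\geqslant 2$ is positive, so no difficulty arises when $D_K = 1$. No step here is genuinely hard — the argument is essentially a careful bookkeeping of explicit constants — and the two places worth double-checking are the Euler-product domination $\zeta_K(6)\leqslant\zeta(6)^{d_K}$ and the numerical inequality $945/(4\pi^3) > 7.6$, both straightforward but essential to obtaining exactly the stated shape of the bound.
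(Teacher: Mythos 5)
Your proposal is correct and follows essentially the same route as the paper: Lemma \ref{covolprotobound}, Proposition \ref{brauersiegel} with the bound $\zeta_K(t+1)\leqslant\zeta(t+1)^{d_K}$, and Zimmert's regulator estimate. The only (immaterial) difference is that the paper takes $t=4.99$ and then uses $D_K\geqslant 1$ and $t(t+1)<30$ to round to the stated constants, whereas your choice $t=5$ gives the exponent $n^2+n/2-3$ and the factor $1/750$ exactly, with $945/(4\pi^3)\approx 7.62>7.6$ playing the role of the paper's $\tfrac12\alpha(5.99)\approx 7.61$.
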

\begin{proof}
If $R_K$ is the regulator of $K$ and $t > 0$ is any real number, Proposition \ref{brauersiegel} and Zimmert's bound for the regulator \eqref{zimmert} imply that
\begin{align*}
\frac{1}{h_K} \geqslant \frac{R_K}{H(d_K, D_K, t)} \geqslant \frac{1}{25 \cdot H(d_K, D_K, t)} e^{0.46 \cdot d_K} ,
\end{align*}
where
\begin{align}\label{thefunctionH}
H(d_K, D_K, t) = 2 t (t+1) \lp \frac{1}{2 \pi^{(1+t)/2}} \Gamma \lp \frac{t+1}{2} \rp \rp^{d_K} D_K^{(t+1)/2} \zeta_K (t+1).
\end{align}
With Lemma \ref{covolprotobound} we thus obtain the estimate
\begin{align}\label{covolest02}
\nonumber \mu \lp \G (K_{\upsilon_0}) / \Gamma \rp &\geqslant \frac{2}{25} D_K^{n(2n+1)/2} \lp \frac{1}{4} e^{0.46} \Pi (n) \rp^{d_K} \frac{1}{H(d_K, D_K, t)} \\
\nonumber &= \frac{1}{25} D_K^{n(2n+1)/2} \lp \frac{1}{2} e^{0.46} \pi^{(t+1)/2} \Gamma \lp \frac{t+1}{2} \rp^{\! \! -1} \Pi (n) \rp^{d_K} \\ 
\nonumber &\quad \quad \times \frac{1}{t (t+1) D_K^{(1+t)/2} \zeta_K (t+1)}\\[+1em]
&\geqslant \frac{D_K^{(2n^2 + n - t - 1)/2}}{25 t (t+1)} \lp \frac{1}{2} e^{0.46} \Pi(n) \alpha(t+1) \rp^{d_K},
\end{align}
where we used the classical estimate $\zeta_K (t+1) \leqslant \zeta(t+1)^{d_K}$ and wrote
\begin{align*}
\alpha (t+1) = \pi^{(t+1)/2} \Gamma \lp \frac{t+1}{2} \rp^{\! \! -1} \zeta(t+1)^{-1} = \frac{t (t+1)}{2 \xi (t+1)},
\end{align*}
where $\xi$ denotes the \textit{Riemann xi function}. Since we are free to choose the value of $t$, we take $t = 4.99$ which gives $\alpha (t+1) \approx 15.2199$. Then since $t(t+1) < 30$, we conclude from \eqref{covolest02} that
\begin{align*}
\nonumber \mu \lp \G (K_{\upsilon_0}) / \Gamma \rp > \frac{1}{750} D_K^{n^2 + n/2 - 3} \lp 7.6  e^{0.46} \Pi(n) \rp^{d_K}.
\end{align*}
This completes the proof.
\end{proof}
\begin{corollary}\label{toyotacorollary}
    Suppose that $\mu \lp \G (K_{\upsilon_0}) / \Gamma \rp \leqslant \Psi(n)$. Then the discriminant of $K$ satisfies the bound
    \begin{align}\label{covolestD}
     D_K < \lp 1372.5 \cdot \Pi (n)^{1-d_K}  \Big( 7.6 e^{0.46} \Big)^{-d_K} \rp^{1/(n^2 + n/2 - 3)}.
\end{align}
\end{corollary}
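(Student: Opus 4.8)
The plan is to simply combine the lower bound of Lemma \ref{LLLemmaxyx} with the hypothesized upper bound and then isolate $D_K$. First I would chain the two inequalities: by Lemma \ref{LLLemmaxyx} and the assumption $\mu(\G(K_{\upsilon_0})/\Gamma) \leqslant \Psi(n)$,
\begin{align*}
\frac{1}{750} D_K^{n^2 + n/2 - 3}\lp 7.6\, e^{0.46}\,\Pi(n)\rp^{d_K} < \mu\lp \G(K_{\upsilon_0})/\Gamma\rp \leqslant \Psi(n),
\end{align*}
so that $D_K^{n^2 + n/2 - 3} < 750\,\Psi(n)\,\lp 7.6\, e^{0.46}\,\Pi(n)\rp^{-d_K}$. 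Next I would invoke the numerical estimate \eqref{09012025-numericalupperbound}, namely $\Psi(n) < 1.83\,\Pi(n)$, to replace $750\,\Psi(n)$ by $1372.5\,\Pi(n)$, and then absorb the resulting extra factor $\Pi(n)$ into the power of $\Pi(n)$ already present, obtaining
\begin{align*}
D_K^{n^2 + n/2 - 3} < 1372.5\cdot \Pi(n)^{1-d_K}\,\lp 7.6\, e^{0.46}\rp^{-d_K}.
\end{align*}

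The only point that requires any attention is that, in order to pass from a bound on $D_K^{n^2+n/2-3}$ to a bound on $D_K$ by taking roots, the exponent $n^2 + n/2 - 3$ must be positive; but since $n \geqslant 2$ we have $n^2 + n/2 - 3 \geqslant 2 > 0$, so raising both sides to the power $1/(n^2+n/2-3)$ preserves the inequality and yields exactly \eqref{covolestD}. I do not expect any genuine obstacle here: the corollary is an immediate consequence of Lemma \ref{LLLemmaxyx} together with \eqref{09012025-numericalupperbound}, and the whole argument is a line or two of elementary algebraic manipulation.
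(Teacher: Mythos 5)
Your proposal is correct and matches the paper's (very brief) proof, which likewise just combines Lemma \ref{LLLemmaxyx} with the bound $\Psi(n) < 1.83\,\Pi(n)$ from \eqref{09012025-numericalupperbound} and solves for $D_K$, noting $750 \cdot 1.83 = 1372.5$. Your observation that the exponent $n^2 + n/2 - 3$ is positive for $n \geqslant 2$ is the only point needing care, and you handle it correctly.
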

\begin{proof}
    This follows in an analogous way to the proof of Corollary \ref{protodbound}.
\end{proof}
\subsection{Monotonicity of the lower bounds}
In this section we arrive at the following preliminary conclusions: if $n \geqslant 3$, then $K = \Q$; and if $n = 2$, then $K$ is either $\Q(\sqrt{5})$ or $\Q$.\par 
We proceed by bounding the right-hand side of \eqref{covolest04} in terms of only $d_K$ and $n$, effectively eliminating $D_K$ from the statement of Lemma \ref{LLLemmaxyx} (for now). This will allow us to deduce upper bounds on $d_K$ (depending on $n$) under the assumption that $\Gamma$ has smaller covolume than $\Gamma_0$.
\subsubsection*{I. The cases $\bm{ n \geqslant 4}$}
We proceed by bounding $D_K$ from below in terms of $d_K$ with Odlyzko's estimates. In our case where $K$ is totally real, inserting the bound \eqref{odlyzkobounds} into the bound \eqref{covolest04} from Lemma \ref{LLLemmaxyx} and writing $f(n) = n^2 + n/2 - 3$, we obtain that
\begin{align}\label{odlyzkocovolbound}
\mu \lp \G (K_{\upsilon_0}) / \Gamma \rp > O(n,d_K, A,E),
\end{align}
where
\begin{align*}
    O(n,d_K, A,E) := F(d_K, A^{d_K}e^{-E}, n) = \frac{1}{750} e^{-E \cdot f(n)}\lp 7.6 e^{0.46} A^{f(n)} \Pi (n) \rp^{d_K}.
\end{align*}
\par For a certain choice of the pair $(A,E)$ from Odlyzko's table, which will be made more explicit later, we now make the following claims. 
\begin{lemma}\label{threeclaimsforthreeproofs}
There exists a choice of parameters $(A,E)$ such that the following claims hold:
\begin{itemize}
\item[(a)] For $n \geqslant 2$, the function $n \mapsto \Pi (n)^{-1} O(n,2,A,E)$ is increasing. 
\item[(b)] For $n \geqslant 3$, the function $d_K \mapsto O(n, d_K, A, E)$ is increasing.
\item[(c)] We have $\Pi(4)^{-1} O(4,2,A,E) > 1.83$.
\end{itemize}
\end{lemma}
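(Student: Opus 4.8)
The plan is to reduce each of (a), (b), (c) to one elementary inequality --- in $A$ for (b), in the single quantity $x:=A^{2}e^{-E}$ for (a) and (c) --- and then to pick a pair $(A,E)$ off Odlyzko's list meeting all three at once. Everything rests on the two identities $f(n+1)-f(n)=2n+3/2$ and $\Pi(n+1)/\Pi(n)=(2n+1)!/(2\pi)^{2n+2}$. Setting $b_n:=7.6\,e^{0.46}A^{f(n)}\Pi(n)$, so that $O(n,d_K,A,E)=\frac{1}{750}e^{-Ef(n)}b_n^{d_K}$, and $g(n):=\Pi(n)^{-1}O(n,2,A,E)=\frac{(7.6\,e^{0.46})^{2}}{750}x^{f(n)}\Pi(n)$, these identities give
\begin{equation*}
\frac{b_{n+1}}{b_n}=A^{2n+3/2}\,\frac{(2n+1)!}{(2\pi)^{2n+2}},\qquad\frac{g(n+1)}{g(n)}=x^{2n+3/2}\,\frac{(2n+1)!}{(2\pi)^{2n+2}}.
\end{equation*}
One also notes that $(2n+1)!/(2\pi)^{2n+2}$ is increasing in $n$ for $n\geqslant2$, since its successive ratio $(2n+3)(2n+2)/(2\pi)^{2}$ exceeds $1$ there.

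For (b): as $d_K\mapsto O(n,d_K,A,E)$ is a positive multiple of $b_n^{d_K}$, the claim is equivalent to $b_n>1$ for all $n\geqslant3$. Once $A>1$ the ratio $b_{n+1}/b_n$ is a product of two factors increasing in $n$, so it suffices to check $b_3>1$ and $b_4/b_3>1$; the second reads $A^{15/2}>(2\pi)^{8}/7!$, which is implied by the first, namely $7.6\,e^{0.46}A^{15/2}\Pi(3)>1$, i.e.\ $A>(7.6\,e^{0.46}\Pi(3))^{-2/15}\approx 5.65$. For (a): once $x>1$ both factors of $g(n+1)/g(n)$ are increasing in $n$ for $n\geqslant2$, so $g(n)$ is increasing for $n\geqslant2$ as soon as $g(3)/g(2)>1$, i.e.\ $x^{11/2}>(2\pi)^{6}/120$, i.e.\ $x>((2\pi)^{6}/120)^{2/11}\approx 3.11$. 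For (c): after clearing denominators, $g(4)>1.83$ is the numerical inequality $x^{15}>1.83\cdot 750/((7.6\,e^{0.46})^{2}\Pi(4))$, i.e.\ $x>4.92$; this supersedes the requirement of (a) and is checked by a single substitution once $(A,E)$ has been fixed.

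It thus remains to exhibit one pair $(A,E)$ from \cite{odlyzkowebsite} with $A>5.65$ and $A^{2}e^{-E}>4.92$, and this --- rather than any of the monotonicity arguments --- is where the real care is needed. The subtlety is that \eqref{odlyzkobounds} must hold for every totally real field, hence in particular for $\Q(\sqrt{5})$, which forces $A^{2}e^{-E}<D_{\Q(\sqrt{5})}=5$; so $x$ must be squeezed into the tiny window $(4.92,5)$, and one must simultaneously keep $A$ below the value at which the degree-$3$ constraint $A^{3}e^{-E}<49$ (coming from the real cyclotomic field of conductor $7$) would be violated. In practice I would take the entry of Odlyzko's table that is essentially sharp at degree $2$, record its numerical values, verify (c) by direct substitution, and then read off (a) and (b) from the two ratio identities above. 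The main obstacle is exactly this: all three conditions, together with validity of the discriminant bound, confine the admissible pair to a very narrow numerical corridor.
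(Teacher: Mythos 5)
Your reduction of all three claims to elementary inequalities in $A$ and $x := A^{2}e^{-E}$ is correct, and it is in places tidier and sharper than the paper's own argument. The paper handles (a) by estimating $\log\bigl(\Pi(n+1)/\Pi(n)\bigr)$ with Stirling's formula and extracting the uniform sufficient condition \eqref{ClaimAequiv}, i.e.\ $x \geqslant e^{1.2284}\approx 3.42$, and handles (b) by verifying $7.6e^{0.46}A^{f(n)}\Pi(n)\geqslant 1$ separately for $n=3,\dots,14$ before monotonicity of $\Pi(n)$ takes over. Your exact ratio identities $g(n+1)/g(n)=x^{2n+3/2}(2n+1)!/(2\pi)^{2n+2}$ and $b_{n+1}/b_{n}=A^{2n+3/2}(2n+1)!/(2\pi)^{2n+2}$, combined with the observation that $(2n+3)(2n+2)>(2\pi)^{2}$ for $n\geqslant 2$, replace both of these with a single base-case check each ($x>3.11$ for (a), $A>5.65$ for (b), the latter matching the paper's \eqref{ClaimBanswer}); your threshold $x>4.92$ for (c) agrees with the paper's \eqref{ClaimCequiv}. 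Your further remark that validity of \eqref{odlyzkobounds} for $\Q(\sqrt{5})$ forces $x<5$, so that the admissible window for $x$ is roughly $(4.92,5)$, is also correct and explains why the eventual choice is so delicate.

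The one genuine incompleteness is exactly the point you flag but do not resolve: the lemma asserts the \emph{existence} of a pair $(A,E)$ in Odlyzko's table, and you never exhibit one. Having shown that any admissible pair must have $A^{2}e^{-E}$ in a window of width less than $0.08$ just below the degree-two discriminant bound, the existence of a table entry that sharp is a nontrivial empirical fact about the table; it cannot be settled by describing the search you would perform, since a priori the table might contain no entry meeting both $A>5.65$ and $x>4.92$ simultaneously. The paper closes this step by naming $(A,E)=(6.894,\,2.2667)$, for which $A^{2}e^{-E}\approx 4.926$ and $A>5.65$, so your reduction does in fact go through --- but as written the argument stops one verification short of proving the lemma.
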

Together, these claims imply that for any $n \geqslant 4$, the lattice $\Gamma$ must “come from $\Q$” in the sense discussed earlier. Indeed, we know that the covolume of $\Gamma_0$ in $\SP_\yoyo (\R)$ is smaller than $1.83 \cdot \Pi (n)$. If $n \geqslant 4$ and $K \neq \Q$, so that $\lb K : \Q \rb = d_K \geqslant 2$, the three claims of the lemma then imply that
\begin{align*}
    \Pi (n)^{-1} O(n, d_K, A, E) \geqslant \Pi (n)^{-1} O(n, 2, A, E) \geqslant \Pi (4)^{-1} O(4,2, A, E) > 1.83,
\end{align*}
and consequently $O(n, d_K, A, E) > 1.83 \cdot \Pi (n)$. In view of \eqref{referencecovolume}, \eqref{09012025-numericalupperbound}, and \eqref{odlyzkocovolbound}, this implies that for $n \geqslant 4$, a number field $K$ of degree greater than $1$ cannot give rise to a lattice $\Gamma$ of \textit{minimal} covolume. Hence, under these assumptions, $d_K = 1$ and $K = \Q$, as claimed. 
\begin{proof}[Proof of Lemma \ref{threeclaimsforthreeproofs}]
To see part \textit{(a)}, suppose $n \geqslant 1$ is arbitrary. Then we compute that
\begin{align*}
   &\log \lp \frac{\Pi(n+1)^{-1} O(n+1, 2, A, E)}{\Pi(n)^{-1} O(n, 2, A, E)} \rp \\ 
   &\quad \quad = -E \cdot (2n+3/2) + (4n+3) \log A + \log \frac{(2n+1)!}{(2 \pi)^{2n+2}} \\[+0.4em]
   &\quad \quad > -E \cdot (2n+3/2) + (4n+3) \log A - (2n+2) \log 2 \pi \\[+0.7em]
   &\quad \quad \quad \quad + \tfrac{1}{2} \log 2 \pi (2n+1) + (2n+1) \log (2n+1) - (2n+1),
\end{align*}
where we used the inequality \eqref{pointwisestirling}. Writing $\Omega(A,E) = 4 \log A - 2 E - 2 \log 2 \pi - 2$ and rearranging the terms on the right-hand side of the estimate above, we find that
\begin{align*}
&\log \lp \frac{\Pi(n+1)^{-1} O(n+1, 2, A, E)}{\Pi(n)^{-1} O(n, 2, A, E)} \rp 
\\[+0.5em] &\quad \quad > n \lp 2 \log (2n+1) + \Omega (A, E) \rp + \tfrac{3}{2} \log (2n+1) + 3 \log A - \tfrac{3}{2} E - \tfrac{3}{2} \log 2 \pi - 1 
\\[+0.5em] &\quad \quad > n ( 2 \log (2n + 1) + \Omega(A,E) ) + \tfrac{3}{2} \log (2n+1) + \tfrac{3}{4} \Omega(A,E).
\end{align*}
Since our claim will follow if the logarithm we are estimating is non-negative, we derive a condition on $\Omega(A,E)$ which expresses this. The right-hand side above is non-negative if and only if
\begin{align*}
    \Omega(A,E) \cdot (n + 3/4) + 2n \log (2n+1) + \tfrac{3}{2} \log (2n+1) \geqslant 0,
\end{align*}
which is equivalent to
\begin{align}\label{conditiononOmega}
    4 \log A - 2 E \geqslant 2 \log 2 \pi + 2 - 2 \log (2n+1).
\end{align}
Since the right-hand side of \eqref{conditiononOmega} is decreasing in $n$, we conclude that our claim is justified if we can choose $A$ and $E$ such that
\begin{align}\label{ClaimAequiv}
    2 \log A - E \geqslant \log 2 \pi + 1 - \log 5 \approx 1.2284.
\end{align}
We postpone the matter of choosing $A$ and $E$ such that \eqref{ClaimAequiv} is satisfied until a later point. \par 
To prove part \textit{(b)}, we need to show that there is a choice of $A > 1$ such that for $n \geqslant 3$, $7.6 e^{0.46} A^{f(n)} \Pi (n) \geqslant 1$, or equivalently,
\begin{align}\label{ClaimBequiv}
    \log A \geqslant \frac{-\log \Pi (n) - \log 7.6 - 0.46}{f(n)}.
\end{align}
By computing approximations of the first values of $\Pi(n)$ and using Stirling's formula, one can easily see that $\Pi (n)$ is increasing for $n \geqslant 14$. Since $f(n)$ is certainly also increasing for $n \geqslant 14$, we only need to choose $A$ such that \eqref{ClaimBequiv} holds for $n = 3, \ldots, 14$. By using lower bounds for the values of $\Pi (n)$ and inspecting the inequalities \eqref{ClaimBequiv} for these values of $n$, we find that $A$ satisfies the claim \textit{(b)} provided that
\begin{align}\label{ClaimBanswer}
    A > 5.66.
\end{align}
Provided that such an $A$ exists which also satisfies \eqref{ClaimAequiv}, the claim is proved. 
\par We now prove part \textit{(c)}. By using the approximation $\Pi(4) \approx 3.9 \cdot 10^{-10}$, we see that this claim is satisfied if 
\begin{align*}
    e^{-E \cdot f(4)} A^{2 \cdot f(4)} \Pi (4) > 1.83 \cdot 750 \cdot \frac{1}{7.6^2} \cdot e^{-0.92} \approx 9.4697.
\end{align*}
In order for this to hold, it is sufficient that 
\begin{align}\label{ClaimCequiv}
    -E + 2 \log A > \frac{\log 9.47 - \log \Pi (4)}{f(4)}.
\end{align}
The proof will be concluded once we show that we can choose $A$ and $E$ that satisfy this inequality while also satisfying \eqref{ClaimBanswer} and \eqref{ClaimAequiv}. \par 
We now search through the possible pairs $(A,E)$ to find a choice of $A$ and $E$ satisfying the three requirements \eqref{ClaimCequiv}, \eqref{ClaimBanswer}, and \eqref{ClaimAequiv}. It turns out that we may take 
\begin{align*}
    (A,E) = (6.894, 2.2667),
\end{align*}
and the lemma is proved.
\end{proof}
\subsubsection*{II. The case $\bm{n = 3}$}
When $n = 3$ we are no longer able to rule out all the cases $d_K \geqslant 2$ in a uniform way. Rather, we will have to consider different regimes of $d_K$ and argue accordingly. Hence, our first order of business is to determine these regimes. \par 
Initially, we ask how large $d_K$ has to be in order that
\begin{align*}
\Pi(n)^{-1} O(n,d_K,A,E) = \Pi(3)^{-1} O(3,d_K,A,E) > 1.83.
\end{align*}
Since $\Pi (3) = 45 / 256 \pi^{12}$, we have
\begin{align*}
\Pi(3)^{-1} O(3,d_K,A,E) &= \frac{1}{750} \Pi(3)^{-1}  e^{-E \cdot 7.5} \lp 7.6 e^{0.46} A^{7.5} \cdot \Pi (3) \rp^{d_K} \\[+1em]
&= \frac{256 \pi^{12}}{33750} \cdot  e^{-7.5E}\lp e^{0.46} A^{7.5} \frac{342}{256 \pi^{12}} \rp^{d_K}.
\end{align*}
This quantity exceeds $1.83$ precisely when
\begin{align*}
   d_K \lp 7.5 \log A + 0.46 + \log \frac{342}{256 \pi^{12}} \rp - 7.5E > \log \frac{1.83 \cdot 33750}{256 \pi^{12}},
\end{align*}
and for $A > 5.65$ (ensuring that the coefficient of $d_K$ is positive), this is certainly satisfied when
\begin{align}\label{ineqthreepointthirteen}
    d_K > \frac{ 7.5E - 8.25}{7.5 \log A - 12.99}
\end{align}
because of the approximations
\begin{align*}
0.46 + \log \frac{342}{256 \pi^{12}} \approx -12.987, \quad \quad \log  \frac{1.83 \cdot 33750}{256 \pi^{12}} \approx -8.251.
\end{align*}
\par In conclusion, \eqref{ineqthreepointthirteen} is a sufficient condition for the inequality $O(3,d_K, A,E) > 1.83 \cdot \Pi(3)$ to hold. The task now, therefore, is to choose the pair $(A,E)$ from Odlyzko's table in such a way as to \textit{minimize} the right-hand side of \eqref{ineqthreepointthirteen}. Using a computer algebra system, we find that the minimal value is $3.31$ and is attained at $(A,E) = (13.047, 3.8667)$. It follows that a number field of degree at least $4$ cannot give rise to the lattice $\Gamma$.\par 
To completely settle the case $n = 3$ we must now exclude the two remaining undesirable possibilities $d_K = 2$ and $d_K = 3$. To this end, assuming that $\Gamma$ has covolume smaller than $\mu \lp \SP_6(\R) / \Gamma_0 \rp = \zeta(2) \zeta(4) \zeta(6) \Pi (3) < 1.83 \cdot \Pi (3)$, we see from Corollary \ref{toyotacorollary} that
\begin{align*}
D_K < \lp 1372.5 \cdot \Pi (3)  \Big( 7.6 e^{0.46} \Pi(3) \Big)^{-d_K} \rp^{1/7.5} \approx
\begin{cases}
10.63 &\text{ if $d_K = 2$}, \\
60.09 &\text{ if $d_K = 3$}.
\end{cases}
\end{align*}
By consulting the $L$-functions and modular forms database \cite{lmfdb}, we find that $K$ is restricted to being one of the following number fields:
\begin{itemize}
    \item If $d_K = 2$, there are two possible number fields, both with class number $1$, with respective discriminants $D_K = 5, 8$. In light of the additional information that $h_K = 1$, the estimate \eqref{protoD} in Corollary \ref{protodbound} can be applied to these fields to yield an even sharper bound on $D_K$. Thus, with $(n,d_K) = (3,2)$, we find that in fact $D_K < 5.27$, so that $D_K = 5$ and $K = \Q \lb \sqrt{5} \rb$ is the only possibility. We postpone the matter of conclusively excluding this number field until a later point when we have determined the possibilities in the case $n = 2$ as well.
    \item If $d_K = 3$, the only possibility is $D_K = 49$, in which case one also has $h_K = 1$. Knowing the class number, we can again apply Corollary \ref{protodbound} and obtain the strengthened bound $D_K < 28.087$. This is impossible since any totally real cubic number field must have discriminant at least $49$.
\end{itemize}
\subsubsection*{III. The case $\bm{n = 2}$}
We are now forced to refine our covolume estimates in order to get the implied bounds on $d_K$ within a range where inspection in number field databases is a viable way forward. We thus begin by recalling from Lemma \ref{LLLemmaxyx} and \eqref{covolest02} that, when $n = 2$,
\begin{align}\label{bigineqnumberPI}
\nonumber \mu (\G(K_{\upsilon_0}) / \Gamma) &> \frac{D_K^{5 - (t+1)/2}}{25 t (t+1)} \lp \frac{3e^{0.46}}{64 \pi^6} \pi^{(t+1)/2} \Gamma \lp \frac{t+1}{2} \rp^{\! \! -1} \zeta(t+1)^{-1} \rp^{d_K} \\
&= \frac{e^{-5E+E(t+1)/2 }}{25 t (t+1)}  \lp  \frac{3e^{0.46}}{64 \pi^6} A^{4.5 - t/2} \pi^{(t+1)/2} \Gamma \lp \frac{t+1}{2} \rp^{\! \! -1} \zeta(t+1)^{-1} \rp^{d_K},
\end{align}
where we also used the generic bound \eqref{odlyzkobounds} with $(A,E)$ an unspecified pair from Odlyzko's table \cite{odlyzkowebsite}.
For specific choices of $A$, $E$, and $t$, we initially want to show that for $d_K$ larger than a certain threshold, the right-hand side of \eqref{bigineqnumberPI} exceeds the covolume $\zeta(2) \zeta(4) \Pi(2)$ of $\SP_4(\Z)$ in $\SP_4(\R)$. Moreover, we wish to choose $(A,E,t)$ such that this threshold is as low as possible. Getting such a lower bound on $d_K$ requires us to guarantee in some way that the base of the exponential expression of the degree above is strictly bigger than $1$, i.e.
\begin{align}\label{baseshouldbebiggerthanone}
    \frac{3e^{0.46}}{64 \pi^6} A^{4.5 - t/2} \pi^{(t+1)/2} \Gamma \lp \frac{t+1}{2} \rp^{\! \! -1} \zeta(t+1)^{-1} > 1.
\end{align}
Since the search for the optimal parameters $(A,E)$ will involve a computer algebra system in any case, we merely add the condition \eqref{baseshouldbebiggerthanone} into our search parameters and carry out the following program:\\
\begin{adjustwidth}{2.5em}{0pt}
\begin{lstlisting}
(Python 3.11)
# The bound 5.5535611217287 was obtained with A , E , t = 21.512 , 6.0001 , 1.2000000000000002

# List of values of A in Odlyzko's Table 4 of discriminant bounds
Alist = [ ... ]

# List of values of E in Odlyzko's Table 4 of discriminant bounds
Elist = [ ... ]

eta = 3*math.exp(0.46)/(64*(math.pi)**6)
standardcovolumetwo = scipy.special.zeta(2)*scipy.special.zeta(4)*(3/(32*(math.pi)**6))

def alpha(s):
    return (math.pi)**(s/2)*(scipy.special.gamma(s/2))**(-1)*(scipy.special.zeta(s))**(-1)

def rhs(A,E,t): 
    logXcoeff = E*(t+1)/2-5*E - math.log(25*t*(t+1))
    return (math.log(standardcovolumetwo) - logXcoeff)/math.log((eta*A**(4.5-t/2)*alpha(t+1)))

def optimizer():
    minvalue = 58
    for i in range(0,len(Alist)):
        A = Alist[i]
        E = Elist[i]
        for increment in range(1,250):
            t = 0.1*increment
            if eta*A**(4.5-t/2)*alpha(t+1) > 1: # ensuring the base of the exp. fct. is > 1
                if rhs(A,E,t) < minvalue:
                    minvalue = rhs(A,E,t)
                    abest = A
                    ebest = E
                    tbest = t
    print('The bound', minvalue, 'was obtained with A , E , t =', abest, ',', ebest, ',', tbest)
\end{lstlisting}
\end{adjustwidth}
\vspace{1em}
(By fine-tuning the search for an optimal value of $t$, given that this value is $\approx 1.2$, we made only insignificant improvements on the resulting bound on $d_K$.) Thus, with the choices
\begin{align}
    (A,E,t) = (21.512, 6.0001, 1.2),
\end{align}
we get that $\mu(\G_\infty / \Gamma) > \Psi(2)$ whenever $d_K \geqslant 6$. Accordingly, we know that the number field $K$ which realizes our lattice of minimal covolume must have degree $d_K \in \lbr 1, 2, 3, 4, 5 \rbr$. \par 
We now translate this bound on $d_K$ into bounds on the discriminant $D_K$, which will force $K$ to belong to a small list of possible number fields. To this end, we compute that $\Pi(2) = 3 / 32 \pi^6$ and insert the optimized parameter $t = 1.2$ into \eqref{covolest02} to obtain
\begin{align*}
\mu \lp \G (K_{\upsilon_0}) / \Gamma \rp 
> \frac{D_K^{3.9}}{66} \lp \frac{3 e^{0.46}}{64 \pi^6} \alpha(2.2) \rp^{d_K} 
> \frac{D_K^{3.9}}{66} \cdot 0.00019^{d_K},
\end{align*}
where we also computed that
\begin{align*}
\frac{3 e^{0.46}}{64 \pi^6} \alpha(2.2) \approx 0.0001919\ldots > 0.00019. 
\end{align*}
By demanding that $\mu \lp \G (K_{\upsilon_0}) / \Gamma \rp < \zeta(2) \zeta(4) \Pi(2) = 1/5760$, we obtain the following upper bounds on $D_K$:
\begin{align}\label{DKboundn2}
D_K < \lp \frac{11 \cdot 0.00019^{-d_K}}{960} \rp^{1/3.9} 
\approx \begin{cases}
25.74 &\text{ if } d_K = 2, \\
231.65 &\text{ if } d_K = 3, \\
2084.50 &\text{ if } d_K = 4, \\
18757.18 &\text{ if } d_K = 5.
\end{cases}
\end{align}
\par Consulting the $L$-functions and modular forms database \cite{lmfdb}, we can translate the discriminant bounds given in \eqref{DKboundn2} into the following list of concrete possibilities for $K$. (The information given is enough to uniquely identify $K$ in each case.)
\begin{itemize}
	\item If $d_K = 5$, the bound \eqref{DKboundn2} points to a unique totally real number field, which has discriminant $D_K = 14641$ and class number $h_K = 1$. Inserting this information into Corollary \ref{protodbound}, we get the refined bound $D_K \leqslant 3177$, which makes this case impossible.
    \item If $d_K = 4$, we find six possible number fields, all with class number $1$, with respective discriminants $D_K = 725, 1125, 1600, 1957, 2000, 2048$. Appealing again to Corollary \ref{protodbound}, we find that for a quartic extension with class number $1$, necessarily $D_K \leqslant 436$, which makes all five cases impossible. 
    \item If $d_K = 3$, we find five possible number fields, all with class number $1$, with respective discriminants $D_K = 49, 81, 148, 169, 229$. Corollary \ref{protodbound} implies that, with $h_K = 1$, the field $K$ must have discriminant $D_K \leqslant 59$. Hence only the case $D_K = 49$ is possible.
    \item If $d_K = 2$, we find seven possible number fields, all with class number $1$, with respective discriminants $D_K = 5, 8, 12, 13, 17, 21, 24$. By Corollary \ref{protodbound}, one even has $D_K \leqslant 8$ in this case; hence only the cases $D_K = 5, 8$ are possible.
\end{itemize}
\par Now, for each possible number field $K$, knowing its relevant arithmetic invariants allows us to compute the explicit lower bound $S(\Lambda)$ for the covolume of the principal arithmetic subgroup $\Lambda$. By Lemma \ref{covolprotobound} and \eqref{08012025-gammacovolume}, we can then obtain a corresponding lower bound for the covolume of $\Gamma$. By comparing this lower bound to the covolume 
\begin{align*}
\Psi(n) = \mu \lp \SP_\yoyo(\R)/\Gamma_0 \rp = \begin{cases}
1/5760\approx 1.736 \cdot 10^{-4} &\text{ if } n = 2, \\
1/2903040 \approx 3.445 \cdot 10^{-7} &\text{ if } n = 3,
\end{cases}
\end{align*}
we end up with an \textit{upper bound} on the possible contribution of the index $\lb \Gamma : \Lambda \rb$ and the parahoric factors $e' (P_\upsilon)$ that were not taken into account so far. Since all of the possible fields that have not yet been ruled out have class number $1$, we use Lemma \ref{covolprotobound} with $h_K = 1$ and determine the values of $2^{1-2d_K} S(\Lambda)$ for all the possible combinations of degrees and discriminants. Additionally, as the inequality $2^{1-2d_K} S(\Lambda) \leqslant \Psi(n)$ is necessary in order for $\Gamma$ to have minimal covolume in $\SP_\yoyo (\R)$, we also compute the quotient $\Psi (n) / \lp 2^{1-2d_K} S(\Lambda) \rp$ in each case.  \par 
When $n = 2$, we have $\Pi (n) = \Pi(2) = 3/32 \pi^6$, and 
\begin{align*}
\frac{S(\Lambda)}{2^{2 d_K - 1}} = 2 D_K^{5} \lp \frac{3}{128 \pi^6} \rp^{d_K} \zeta_K (2) \zeta_K (4),
\end{align*}
which we evaluate as follows:
\begin{itemize}
\item If $d_K = 3$ and $D_K = 49$, we find that
\begin{align*}
\frac{S(\Lambda)}{2^{2 d_K - 1}} \approx 8.75 \cdot 10^{-6},
\end{align*}
and the quotient of $\Psi (n) = \Psi(2) = 1/5760$ and this approximate value of $2^{1-2d_K} S(\Lambda)$ is approximately $19.85$.
\item If $d_K = 2$ and $D_K = 5$, we have
\begin{align*}
\frac{S(\Lambda)}{2^{2 d_K - 1}} \approx 4.34 \cdot 10^{-6},
\end{align*}
and the quotient of $1/5760$ and this approximate value of $2^{1-2d_K} S(\Lambda)$ appears to be exactly equal to $40$.
\item If $d_K = 2$ and $D_K = 8$, we have
\begin{align*}
\frac{S(\Lambda)}{2^{2 d_K - 1}} \approx 5.97 \cdot 10^{-5}, 
\end{align*}
and the quotient of $1/5760$ and this approximate value of $2^{1-2d_K} S(\Lambda)$ is approximately equal to $2.91$.
\end{itemize}
When $n = 3$, on the other hand, 
\begin{align*}
\frac{S(\Lambda)}{2^{2 d_K - 1}} = 2 D_K^{10.5} \lp \frac{45}{1024 \pi^{12}} \rp^{d_K} \zeta_K (2) \zeta_K (4) \zeta_K (6),
\end{align*}
and we compute that
\begin{itemize}
\item if $d_K = 2$ and $D_K = 5$, then
\begin{align*}
\frac{S(\Lambda)}{2^{2 d_K - 1}} \approx 1.15 \cdot 10^{-7}. 
\end{align*}
The quotient of $1/2903040$ and this approximate value of $2^{1-2d_K} S(\Lambda)$ is approximately equal to $2.99$.
\end{itemize}
\par Since all the quotients computed above exceed $1$, we are forced to make one final refinement to rule out all but one of these number fields. Namely, for the fields listed above, it is not too difficult to compute the index related to the groups $U_K^+$ and $U_K^2$ which appear in the first inequality of Lemma \ref{covolprotobound}. Since the quotient of the middle part and the right-hand side of the lemma is $c(K) = 2^{2 d_K - 1} / \lb U_K^+ : U_K^2 \rb$, we adjust the quotients computed above accordingly by multiplying the values obtained by $1/c(K)$. For $n = 2$, we can argue as follows: 
\begin{itemize}
\setlength{\parindent}{2em}
\item If $d_K = 3$ and $D_K = 49$, we computed the quotient $19.85$. Since $c(K) = 2^5 / \lb U_K^+ : U_K^2 \rb$, the adjusted quotient is $\lb U_K^+ : U_K^2 \rb \cdot 19.85 / 32$, which is less than $1$ (thus making this case impossible) if and only if $\lb U_K^+ : U_K^2 \rb = 1$. We now argue that this is indeed the case: \\[-1.6em]\par $K = \Q ( \alpha)$ where $\alpha = 2 \cos (\tfrac{2 \pi}{7}) \approx 1.25$ has minimal polynomial $X^3 + X^2 - 2X - 1$ and conjugates $\alpha' = -1/(\alpha + 1)$ and $\alpha '' = -1/(\alpha'+1) = -(1+1/\alpha)$. In this case, one may check (e.g. with the help of \cite{lmfdb}) that two fundamental units are $\varepsilon_1 = \alpha$ and $\varepsilon_2 = \alpha^2 - 1$. We now show that the group $U_K^+$ of totally positive units is generated by $\varepsilon_1^2$ and $\varepsilon_2^2$, which will prove that the index of $U_K^2$ in $U_K^+$ is $1$.\\[-1.6em]\par Suppose that $u = \pm \varepsilon_1^{\ell_1} \varepsilon_2^{\ell_2} \in \ringint_K^\times$ is totally positive. Since $\varepsilon_1, \, \varepsilon_2 > 0$, we must have $u = \varepsilon_1^{\ell_1} \varepsilon_2^{\ell_2}$. Moreover, by dividing by the totally positive number $\varepsilon_1^{2 \lfloor \ell_1 / 2 \rfloor} \varepsilon_2^{2 \lfloor \ell_2 / 2 \rfloor}$, we can replace $\ell_1$ and $\ell_2$ with their remainders (mod $2$); that is, we assume that $\ell_1 , \ell_2 \in \lbr 0, 1 \rbr$. If $\ell_1 = \ell_2 = 1$, in which case $u = \alpha (\alpha^2 - 1)$,  $u$ is not totally positive, as its conjugate
\begin{align*}
\alpha'' \lp (\alpha'')^2 - 1 \rp = -(1+1/\alpha)\lp 2/\alpha + 1/\alpha^2 \rp
\end{align*}
is negative. Therefore either $\ell_1 = 0$ or $\ell_2 = 0$, corresponding to the two possibilities $u=\varepsilon_2$ or $u = \varepsilon_1$. However, neither of these cases are possible: The conjugate $\alpha'$ of $\varepsilon_1 = \alpha$ is negative; and for $\varepsilon_2 = \alpha^2 - 1$, one sees that its conjugate
\begin{align*}
(\alpha')^2 - 1 = 1/(\alpha + 1)^2 - 1 < 1 - 1 < 0
\end{align*}
is also negative. In conclusion, $\ell_1 = \ell_2 = 0$, as claimed.
\item If $d_K = 2$ and $D_K = 5$, we computed the quotient $40$. In this case we can check that $\lb U_K^+ : U_K^2 \rb = 1$ so that $c(K) = 2^3 / \lb U_K^+ : U_K^2 \rb = 8$, and hence the adjusted quotient is $5$. Indeed, this follows from \eqref{quadraticfieldnottotallypositiveFU} since, by \eqref{quadnffundamentalunit}, the fundamental unit in this case is $\varepsilon = (1+\sqrt{5})/2$ (corresponding to $a^2 - 5 b^2 = -4$ with $(a,b) = (1,1)$), which is not totally positive, as the non-trivial embedding $K \hookrightarrow \R$ maps $\varepsilon$ to $(1-\sqrt{5})/2 < 0$. In summary, the current case $(d_K, D_K) = (2,5)$ has yet to be ruled out.
\item If $d_K = 2$ and $D_K = 8$, we computed the quotient $2.91$. Once again, we can check that $\lb U_K^+ : U_K^2 \rb = 1$, which yields an adjusted value of $\lb U_K^+ : U_K^2 \rb \cdot 2.91 / 8 < 1$ and rules out this case. Indeed, for this field we have $\varepsilon = (2 + 2 \sqrt{2})/2 = 1 + \sqrt{2}$ (corresponding to $a^2 - 8 b^2 = -4$ with $(a,b) = (2,1)$), which is not totally positive, and we can then argue as in the previous case.
\end{itemize}
Finally, for $n = 3$, we can argue as follows:
\begin{itemize}
\item If $d_K = 2$ and $D_K = 5$, we computed the quotient $2.99$. As we observed in the second case above, the index $\lb U_K^+ : U_K^2 \rb$ is $1$, and hence the adjusted quotient becomes $2.99 / 8 < 1$. This case has therefore been ruled out.
\end{itemize}
\section{Local Considerations and the Final Steps}\label{sectionlocalcontributions}
We have now singled out the number field $K = \Q (\sqrt{5})$ as the only candidate other than $\Q$ that can give rise to the lattice $\Gamma$ (and only for $n = 2$). As we cannot shed any more light on this matter with the global methods we have used up until this point, we will now take the parahoric factors coming from the finite places of $K$ into account. This will allow us to conclusively rule out the field $\Q (\sqrt{5})$ and prove Theorem 1.1 after a detailed analysis of the only remaining case $K = \Q$.
\subsection{Estimates of the parahoric factors} \label{subsection-takingtheparfacts}
For $K = \Q (\sqrt{5})$ and $n = 2$, Theorem \ref{prasadinourspecialcase} shows that
\begin{align*}
\mu (\G (K_{\upsilon_0}) / \Lambda ) = \frac{28125}{1024 \pi^{12}} \, \zeta_K (2) \zeta_K (4) \prod_{\upsilon < \infty} e'(P_\upsilon),
\end{align*}
since $D_K = 5$ and $\Pi (2) = 3/32 \pi^6$. In terms of the covolume of $\Gamma$, this says that
\begin{align*}
\mu (\G (K_{\upsilon_0}) / \Gamma ) = \lb \Gamma : \Lambda \rb^{-1} \frac{28125}{1024 \pi^{12}} \, \zeta_K (2) \zeta_K (4) \prod_{\upsilon < \infty} e'(P_\upsilon).
\end{align*}
From our discussion in the previous section, it follows that $\Gamma$ has minimal covolume if and only if the combined contribution of the index $\lb \Gamma : \Lambda \rb$ and the parahoric factors does not exceed $5$. That is, we can rule out the field $K$ if we can show that
\begin{align}\label{biggerthanfiveisnice}
\lb \Gamma : \Lambda \rb^{-1} \prod_{\upsilon < \infty} e'(P_\upsilon) > 5.
\end{align}
The stronger bound in \eqref{indexboundgaloiscohomology} now states that $\lb \Gamma : \Lambda \rb \leqslant 2^{\# \mathcal{T}}$, so \eqref{biggerthanfiveisnice} will certainly follow if we can show that
\begin{align}\label{biggerthanfiveNO2}
\prod_{\upsilon < \infty} e'(P_\upsilon) > 5 \cdot 2^{\# \mathcal{T}},
\end{align}
where $\mathcal{T}$ denotes the set of finite places $\upsilon$ where $P_\upsilon$ is not hyperspecial. \par 
It is possible, using Bruhat--Tits theory, to describe the Levi components $\textbf{M}_\upsilon$ for each parahoric subgroup $P_\upsilon \subseteq \G (K_\upsilon)$ explicitly, and hence to compute the exact values of $e'(P_\upsilon)$ with the help of \eqref{estrich}. Conveniently, for the case of a $K$-form of $\SP_4$, these computations already exist in the literature. Namely, we obtain from \cite[Sect. 3, Table 2]{ganhankeyu} that for any place $\upsilon < \infty$ of $K$, one either has $\textbf{M}_\upsilon = \SP_2 (\F_{q_\upsilon}) \times \, {}^2 O_2 (\F_{q_\upsilon}) = \SL_2 (\F_{q_\upsilon}) \times \, {}^2 O_2 (\F_{q_\upsilon})$ or $\textbf{M}_\upsilon = \SP_4 (\F_{q_\upsilon})$. (Here ${}^2 O_2 $ denotes the non-split, quasisplit orthogonal group.) By computing the orders of these groups and using \eqref{estrich} (cf. also \cite[eq. (2.12)]{ganhankeyu}, one then has the two possibilities
\begin{align}\label{two_alternatives_from_heaven}
e'(P_\upsilon) = 1 \quad \text{or} \quad e'(P_\upsilon) = T(q_\upsilon) := \frac{q_\upsilon^4-1}{2(q_\upsilon + 1)}.
\end{align}
By the discussion at the end of Section \ref{subsubsectionparahoricsubgroups}, we note that essentially \eqref{biggerthanfiveNO2} can only fail if the number of parahorics which are not hyperspecial is very small. Indeed, every $P_\upsilon$ with $e'(P_\upsilon) \neq 1$ contributes to the right-hand side of \eqref{biggerthanfiveNO2} with a factor $2$, whereas the left-hand side receives a contribution of a factor $\approx q_\upsilon^3$, which exceeds $2$ already for small values of $q_\upsilon$. \par 
More precisely, we check that $T(2) = 2.5$, $T(3) = 10$, and $T(x) > 25$ for $x \geqslant 4$. Therefore, if \eqref{biggerthanfiveNO2} fails, then $P_\upsilon$ is hyperspecial for any finite place $\upsilon$ with $q_\upsilon \geqslant 4$. Hence, the only possibilities for non-hyperspecial $P_\upsilon$ come from residue fields of order $q_\upsilon = 2$ or $q_\upsilon = 3$. However, neither of these possibilities can occur for $K = \Q (\sqrt{5})$, as we will now demonstrate. \par 
If $q_\upsilon = 2$, then $K_\upsilon = \Q_2 \lb X \rb / \langle X^2 - 5 \rangle$ is a quadratic extension of $\Q_2$ since $5$ is not a square in $\Q_2$. (It is well-known that $x = 2^n u$ with $u \in \Z_2^\times$ is a square in $\Q_2$ if and only if $n$ is even and $u \equiv 1$ (mod $8$), which is not the case for $x = 5$.) Consequently, the local degree of $K$ at a place $\upsilon$ lying above $2$ is $q_\upsilon = 4$. \par 
On the other hand, if $q_\upsilon = 3$, then $K_\upsilon$ must also be a quadratic extension, as $x = p^n u$ (with $u \in \Z_p^\times$) is a square in $\Q_p$ if and only if $n$ is even and $u$ (mod $p$) is a quadratic residue. Since $5$ (mod $3$) is a quadratic non-residue, our claim follows, and the local degree is $q_\upsilon = 9$.\par 
We have now established that $e'(P_\upsilon) = 1$ for all finite places $\upsilon$ of $K$, meaning that $\G$ splits at all finite places by \eqref{09012025-convenient}. To conclusively rule out $K$, we note that since $2n = 4$, our discussion in Section \ref{subsectionalgebraicgroups} shows that $\G$ must be a special unitary group over quaternion algebra $\mathbb{H}_K$. Moreover, if $\upsilon$ denotes any place of $K$, we recall from \cite[Lemma 2.3]{emerykim} that $\SU(\mathbb{H}_{K_\upsilon}; \tau; h)$ is ${K_\upsilon}$-split if and only if the quaternion algebra $\mathbb{H}_{K_\upsilon}$ splits; that is, if and only if there is an isomorphism defined over ${K_\upsilon}$ such that
\begin{align*}
\mathbb{H}_{{K_\upsilon}} \simeq \mathrm{Mat}_{2 \times 2} ({K_\upsilon}).
\end{align*}
\par By \cite[Thm. 14.6.1]{voight}, we then conclude that, because $\G$ is split at all finite places (which means that $\mathbb{H}_K$ splits at all finite places), the number of infinite places where $\mathbb{H}_K$ is ramified (i.e non-split) is either $0$ or $2$. It cannot be the case that $\mathbb{H}_K$ is ramified at both real places of $K$, since $\G$ splits at $\upsilon = \upsilon_0$. Hence $\G$ must even be split at the other real place $\upsilon_1$ of $K$. However, this is impossible, as we have already established, since then $\G(K_{\upsilon_1}) = \SP_4(\R)$ is not compact.
\subsection{Identifying the lattice $\Gamma$}
In this final section we will show that $\G$ is isomorphic to the split form $\SP_\yoyo$ over $\Q$ and complete the proof of Theorem 1.1. \par 
By the arguments up until this point, we know that for $n \geqslant 2$, a lattice $\Gamma \subseteq \SP_\yoyo (\R)$ of minimal covolume comes from $\Q$; that is, $\G$ is defined over $\Q$, and $\Gamma$ is the normalizer in $\G(\R)$ of the principal arithmetic subgroup $\Lambda = \G ( \Q) \cap \prod_{\upsilon < \infty} P_\upsilon$. 
\par Recall from Theorem \ref{prasadinourspecialcase} and \eqref{referencecovolume} the relation
\begin{align*}
\mu (\G_\infty / \Gamma ) = \lb \Gamma : \Lambda \rb^{-1} \mu (\G_\infty / \Gamma_0 ) \prod_{\upsilon < \infty} e' (P_\upsilon),
\end{align*}
and that for almost all places $\upsilon < \infty$, the parahoric $P_\upsilon$ is hyperspecial and $e'(P_\upsilon) = 1$. Under these circumstances, any contribution to the covolume of $\Gamma$ in $\G_\infty$ must come from $\lb \Gamma : \Lambda \rb$ and the local factors corresponding to non-hyperspecial parahoric subgroups, which are indexed by the set $\mathcal{T}$ (cf. the proof of Lemma \ref{covolprotobound}). \par 
In the case of $K = \Q$, Lemma \ref{covolprotobound} states that $\lb \Gamma : \Lambda \rb \leqslant 2^{\# \mathcal{T}}$. For the proof that $\G$ is $\Q$-split it will therefore be sufficient to show that any local factor $e'(P_\upsilon)$ not equal to $1$ is strictly larger than $2$. To this end, let $e'(P_\upsilon) > 1$ be any such factor. By \eqref{09012025-convenient} we then see that $\G$ does not split over $\Q_\upsilon$. Due to this, and since $P_\upsilon$ is special, we can use the explicit formulas for $e'(P_\upsilon)$ in \cite{emerykim}, which we record in the following lemma.
\begin{lemma}[{\cite[Lemma 4.1]{emerykim}}] Let $\upsilon$ be a finite place of $\Q$ and $P_\upsilon \subseteq \G(\Q_\upsilon)$ a special, non-hyperspecial parahoric subgroup. If $q_\upsilon$ denotes the size of the residue field of $\Q_\upsilon$, then
\begin{align*}
e'(P_\upsilon) = 
\begin{cases}
\prod_{j = 1}^n \lp q_\upsilon^j + (-1)^j \rp &\text{ if $n$ is odd},\\[+0.5em]
\prod_{j = 1}^m \lp q_\upsilon^{4j-2} -1 \rp &\text{ if $n = 2m$ is even}.
\end{cases}
\end{align*}
\end{lemma}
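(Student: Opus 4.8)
\textit{Proof idea.} The plan is to compute $e'(P_\upsilon)$ straight from its definition \eqref{estrich} (equivalently, from \eqref{campusfestival}), the only substantial ingredient being the reductive quotient $\overline{\textbf{M}}_\upsilon$ of the Bruhat--Tits group scheme attached to $P_\upsilon$; once that is identified, the rest is bookkeeping with orders of finite groups of Lie type.

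First I would pin down $\G$ over $\Q_\upsilon$. Being of type $C_n$, $\G$ has no outer forms, so $\G\times_\Q\Q_\upsilon$ is an inner form of $\SP_\yoyo$; and since $P_\upsilon$ is not hyperspecial, \eqref{09012025-convenient} says $\G$ does not split over $\Q_\upsilon$, so it is a \emph{non-split} inner form. Combining the description of inner forms of type $C_n$ recalled from \cite{platrap} in Section \ref{subsectionalgebraicgroups} with the fact that over the local field $\Q_\upsilon$ the only central division algebra admitting an involution of the first kind is the quaternion division algebra $D_\upsilon$ (this forces the index $2n/m$ to be $\leqslant 2$, i.e. $m=n$), one gets $\G(\Q_\upsilon)\simeq\SU_n(D_\upsilon,h)$ for a rank-$n$ Hermitian form $h$. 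In particular $\G$ has $\Q_\upsilon$-rank $\lfloor n/2\rfloor$, and the residue ring of $D_\upsilon$ is the unramified quadratic extension $\F_{q_\upsilon^2}$ of the residue field $\F_{q_\upsilon}$, with the canonical involution acting as its nontrivial automorphism.

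The heart of the matter is Step 2: computing $\overline{\textbf{M}}_\upsilon$ for a special, non-hyperspecial (maximal) parahoric of $\SU_n(D_\upsilon,h)$. I would do this from the lattice model --- such a $P_\upsilon$ stabilises a single scaled-self-dual $\mathcal{O}_{D_\upsilon}$-lattice $L$, and $\overline{\textbf{M}}_\upsilon$ is the connected automorphism group of the Hermitian form carried by $L/\mathrm{rad}$ over $\F_{q_\upsilon}$ --- or simply read it off Tits' tables of local indices (cf.\ \cite{prasadyeung}, \cite{kalethaprasad}, and the $\SP_4$ computations in \cite{ganhankeyu}). The outcome --- and the one place where the two parities of $n$ genuinely diverge --- should be: if $n$ is odd, $\overline{\textbf{M}}_\upsilon$ is the quasisplit unitary group of rank $n$ over $\F_{q_\upsilon}$ attached to $\F_{q_\upsilon^2}/\F_{q_\upsilon}$, of dimension $n^2$ and order $q_\upsilon^{n(n-1)/2}\prod_{j=1}^{n}(q_\upsilon^{j}-(-1)^{j})$; while if $n=2m$ is even, $\overline{\textbf{M}}_\upsilon\simeq\mathrm{Res}_{\F_{q_\upsilon^2}/\F_{q_\upsilon}}\SP_{2m}$, of dimension $2(2m^2+m)$ and order $\#\SP_{2m}(\F_{q_\upsilon^2})=q_\upsilon^{2m^2}\prod_{k=1}^{m}(q_\upsilon^{4k}-1)$. (One should also check that this is the relevant special parahoric --- the maximal-volume one, which is what matters for a lattice of minimal covolume, by the reductions of this section --- but for type $C_n$ over $\Q_\upsilon$ I expect the special non-hyperspecial parahorics of a fixed $\G(\Q_\upsilon)$ to all be conjugate, so the question does not really arise.)

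Finally I would substitute into \eqref{estrich}, using $\dim\mathcal{M}_\upsilon=\dim\SP_\yoyo=2n^2+n$ and $\#\SP_\yoyo(\F_{q_\upsilon})=q_\upsilon^{n^2}\prod_{j=1}^{n}(q_\upsilon^{2j}-1)$. In the odd case \eqref{campusfestival} then gives $e(P_\upsilon)=q_\upsilon^{\,n^2+n}\big/\prod_{j=1}^{n}(q_\upsilon^{j}-(-1)^{j})$, whence $e'(P_\upsilon)=\prod_{j=1}^{n}(q_\upsilon^{2j}-1)\big/\prod_{j=1}^{n}(q_\upsilon^{j}-(-1)^{j})=\prod_{j=1}^{n}(q_\upsilon^{j}+(-1)^{j})$ after using $(q^{j}-1)(q^{j}+1)=(q^{j}-(-1)^{j})(q^{j}+(-1)^{j})$. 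In the even case $n=2m$ one gets $e(P_\upsilon)=q_\upsilon^{\,2m(2m+1)}\big/\prod_{k=1}^{m}(q_\upsilon^{4k}-1)$, and splitting $\prod_{j=1}^{2m}(q_\upsilon^{2j}-1)$ according to whether $2j\equiv 2$ or $0 \pmod 4$ makes the symplectic factors cancel, leaving $e'(P_\upsilon)=\prod_{k=1}^{m}(q_\upsilon^{4k-2}-1)$. The main obstacle is Step 2 --- getting $\overline{\textbf{M}}_\upsilon$ right in both parities, with due care for the quadratic residue extension coming from $D_\upsilon$ and for the type of the special vertex; Steps 1 and 3 are then routine.
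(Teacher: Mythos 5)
Your route is genuinely different from the paper's. The authors do not recompute anything: they import the odd case verbatim from \cite[Lemma 4.1]{emerykim} and obtain the even case from \cite[eq. (4.9)]{emerykim} by splitting the product in the numerator according to the parity of the index $j$ --- which is exactly the cancellation you perform in your Step 3. You instead reprove the formula from first principles via Bruhat--Tits theory. Your Step 1 (the identification $\G(\Q_\upsilon)\simeq\SU_n(D_\upsilon,h)$ with $D_\upsilon$ the quaternion division algebra, residue field $\F_{q_\upsilon^2}$) and Step 3 (the arithmetic turning the claimed reductive quotients into the two displayed products) are correct; for instance your odd formula gives $1\cdot 5\cdot 7=35$ for $n=3$, $q_\upsilon=2$, matching the value used later in this section. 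What you gain is an actual proof rather than a citation; what the paper gains is brevity.

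There is, however, one genuine gap, precisely at the point you wave away in parentheses: the special non-hyperspecial parahorics of $\SU_n(D_\upsilon,h)$ are \emph{not} all conjugate. The local Dynkin diagram has two orbits of special vertices with non-isomorphic reductive quotients and different values of $e'$. Already for $n=2$ one special vertex has reductive quotient $\mathrm{Res}_{\F_{q^2}/\F_q}\SL_2$ (dimension $6$, order $q^2(q^4-1)$), giving $e'=q^2-1$ as in the lemma, while the other has a quotient of dimension $4$ and order $q(q^2-1)(q+1)$ --- the connected version of the $\SL_2\times{}^2 O_2$ entry of the Gan--Hanke--Yu table invoked in Section 4.1 --- giving $e'=(q^4-1)/(q+1)=(q-1)(q^2+1)$; an analogous second special vertex exists for every $n\geqslant 2$. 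So the stated equality really pertains to the special vertex of \emph{maximal volume} (the one with the smaller $e'$), which is the only one that can occur here after the reduction to maximal-volume parahorics carried out in Sections 2.3 and 4.3; your proof must identify that vertex (i.e.\ justify that the quotients you name are the ones at the larger of the two special parahorics) rather than appeal to conjugacy. For the way the lemma is used --- only the lower bound $e'(P_\upsilon)>2$ matters, and the other special vertex has a strictly larger $e'$ --- the slip is harmless, but as a proof of the equality as stated it needs this extra step.
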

\begin{proof}
We only need to justify the formula in the case of even rank. However, this follows immediately from \cite[eq. (4.9)]{emerykim} once we split the product in the numerator into two products depending on the parity of the indexing variable $j$.
\end{proof}
With this lemma we can easily deduce that $e'(P_\upsilon) > 2$ for any place $\upsilon \in \mathcal{T}$. Indeed, we certainly have $q_\upsilon \geqslant 2$ for any $\upsilon$, so if $n \geqslant 2$ is odd, then $e'(P_\upsilon) \geqslant 1 \cdot 5 \cdot 7 = 35$. On the other hand, if $n$ is even, then $e'(P_\upsilon) \geqslant 3$. It follows that, if $\Gamma$ has minimal covolume, then $\G$ must split at all finite places. Since $\G$ also splits at the unique infinite place $\upsilon_0$, we conclude (as in the previous section) that $\G = \SP_\yoyo$. \\ \par 

To complete the proof of Theorem 1.1, all that remains is to show that $\Lambda = \Gamma = \Gamma_0$, where the last equality holds up to conjugation. \par 
For $\upsilon$ an arbitrary finite place of $\Q$, the fact that the parahoric subgroup $P_\upsilon$ is hyperspecial means that can find an element $g_\upsilon$ in 
\begin{align*}
\GSP_\yoyo (\Q_\upsilon) = \Big\{ g \in \GL_\yoyo (\Q_\upsilon) : g^\intercal \lp \begin{smallmatrix}
0 & I \\
-I & 0
\end{smallmatrix} \rp g = \chi (g) \lp \begin{smallmatrix}
0 & I \\
-I & 0
\end{smallmatrix} \rp \text{ for some } \chi(g) \in \Q_\upsilon^\times \Big\},
\end{align*}
such that $g_\upsilon P_\upsilon g_\upsilon^{-1} = \SP_\yoyo (\Z_\upsilon)$. Indeed, $\GSP_\yoyo$ acts transitively on the hyperspecial parahoric subgroups of the symplectic group (cf. \cite[Sect. 2.5]{titscorvallis}). Because of the topology on the restricted product $\mathbb{A}_\Q$ and the coherence of the collection $\lbr P_\upsilon : \upsilon < \infty \rbr$, we can assume that $g_\upsilon = I$ for all places $\upsilon$ except finitely many. Doing so, we thus obtain an element 
\begin{align*}
    g = \Bigl( 1, (g_\upsilon)_{\upsilon < \infty} \Bigr) \in \GSP_{2n} (\mathbb{A}_\Q).
\end{align*}
\par The class number of a split reductive group is at most the class number of any maximal split torus (cf. \cite[Corollary to Thm. 8.11]{platrap}). Since $\Q$ has class number $1$, it follows that $\GSP_{2n}$ has class number $1$ over $\Q$, and we have
\begin{align*}
\GSP_{2n} (\A_\Q) = \lp \GSP_{2n} (\R) \times \prod_{\upsilon < \infty} \GSP_{2n} (\Z_\upsilon) \rp \cdot \GSP_{2n} (\Q).
\end{align*}
In terms of the coordinate corresponding to a finite place $\upsilon$, this means that we can write $g_\upsilon = g_\upsilon' h$ where $g_\upsilon' \in \GSP_{2n} (\Z_\upsilon)$ and $h \in \GSP_{2n} (\Q)$. Since $h = (g_\upsilon ')^{-1} g_\upsilon$ and conjugation by $g_\upsilon '$ leaves $\SP_\yoyo (\Z_\upsilon)$ invariant, so that $h P_\upsilon h^{-1}$ simply equals $\SP_\yoyo (\Z_\upsilon)$, we now finally obtain
\begin{align*}
    h \Lambda h^{-1} = h \SP_{2n}(\Q) h^{-1} \cap \prod_{\upsilon < \infty} h P_\upsilon h^{-1} = \SP_{2n}(\Q) \cap \prod_{\upsilon < \infty} \SP_{2n} (\Z_\upsilon) = \Gamma_0.
\end{align*}
\par We have established that $\Lambda = h^{-1} \Gamma_0 h$ for $h \in \mathrm{GSp}_\yoyo (\Q)$. Since $\Gamma$ is the normalizer of $\Lambda$ in $\SP_{2n} (\R)$, and $\Gamma_0 =  N_{\SP_\yoyo (\R)} (\Gamma_0)$ is its own normalizer (cf. \cite{allan}), we now conclude that
\begin{align*}
\Gamma = \Big\{ 
g \in \SP_\yoyo (\R) : h g h^{-1} \in \Gamma_0 \Big\} = \SP_\yoyo (\R) \cap h^{-1} \Gamma_0 h.
\end{align*}
In other words, $h \Gamma h^{-1} = \SP_\yoyo (\R) \cap \Gamma_0 = \Gamma_0$ where $h \in \mathrm{GSp}_\yoyo (\Q)$. Taking into account the isomorphism (now, \textit{automorphism}) $\SP_\yoyo (\R) \xrightarrow{\sim} \G(\R) = \SP_\yoyo (\R) $ induced by the distinguished real place $\upsilon_0$ of $K = \Q$, we conclude that $\Gamma$ is conjugate to $\Gamma_0$ by an element of $\mathrm{GSp}_\yoyo (\R)$ (cf. \cite{hua}). This concludes the proof of Theorem 1.1.
{\subsection{A final remark on non-special parahorics}
We have proved Theorem 1.1 under the assumption that every parahoric had maximal volume. We will now justify this assumption, as promised in Section \ref{subsubsectionparahoricsubgroups}. \par 
For the remainder of the paper, we will deviate from our previous notation and use the symbols $\Gamma$ and $\Lambda$ as free variables that denote, respectively, \textit{any maximal lattice} in $\SP(2n, \R)$ and the principal arithmetic subgroup it normalizes (by the maximality criterion \cite[Satz 3.5]{rohlfs} of Rohlfs). Likewise, $\G$ will denote the algebraic $K$-group that defines $\Lambda$. $P_\upsilon (\Lambda) = P_\upsilon (\Gamma)$ will denote the parahoric subgroup at $\upsilon$ associated to the pair $(\Lambda, \Gamma)$. Finally, we will also write $\Lambda^\mathrm{max}$ to denote a principal arithmetic subgroup for which every $P_\upsilon (\Lambda)$ is of maximal volume; and $\Gamma^\mathrm{max}$ will then denote the normalizer of $\Lambda^\mathrm{max}$. In the same vein, for a pair $(\Lambda, \Gamma)$, we will write $\Lambda^\mathrm{max}$ and $\Gamma^\mathrm{max}$ to denote the corresponding “maximized” principal arithmetic subgroup (as explained in \cite[Sect. 4.3]{belolipetskyemery} and \cite[Sect. 3.8]{borelprasad}) and its normalizer in $\G(\R)$, respectively.\par 
Up until this point, we have proved the following result in particular.
\begin{theorem}\label{theorem_when_parahorics_are_maximal}
Let $V_0$ denote the minimal covolume of all lattices in $\SP(2n, \R)$. If a lattice of the form $\Gamma^\mathrm{max}$ has covolume $V_0$, then $\G$ splits at every finite place of $K$, and every parahoric $P_\upsilon$ associated to $\Gamma^\mathrm{max}$ satisfies $e'(P_\upsilon) = 1$.
\end{theorem}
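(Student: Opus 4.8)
The statement is a recapitulation of what has been established in Sections~\ref{SectionBoundingthecovol} and~\ref{sectionlocalcontributions}, so the plan is to assemble those ingredients while confirming that every step used only the \emph{speciality} of the parahorics $P_\upsilon$ --- a property that maximal-volume parahorics automatically enjoy, by Remark~1 of Section~\ref{subsubsectionparahoricsubgroups} --- and never anything stronger. The starting observation is that $\Gamma_0 = \SP_\yoyo(\Z)$ is a lattice in $\SP_\yoyo(\R)$ of covolume $\Psi(n)$, so $V_0 \leqslant \Psi(n)$; hence any $\Gamma^{\mathrm{max}}$ of covolume $V_0$ satisfies $\mu(\G(K_{\upsilon_0})/\Gamma^{\mathrm{max}}) \leqslant \Psi(n)$, which is exactly the hypothesis that drives all the estimates of Section~\ref{SectionBoundingthecovol}.

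First I would run the global reduction. Lemma~\ref{covolprotobound} (whose proof uses only that special parahorics have types $\Theta_\upsilon$ without symmetries, that $e'(P_\upsilon)\geqslant 1$, and that the $e'(P_\upsilon)$ are integers), together with Corollary~\ref{protodbound}, Lemma~\ref{LLLemmaxyx}, Corollary~\ref{toyotacorollary}, and the monotonicity argument culminating in Lemma~\ref{threeclaimsforthreeproofs} and the case-by-case treatments of $n = 3$ and $n = 2$, forces $K = \Q$ when $n \geqslant 3$ and $K \in \{\Q, \Q(\sqrt{5})\}$ when $n = 2$. Next I would invoke Section~\ref{subsection-takingtheparfacts}: the $\SP_4$ Levi-component table of~\cite{ganhankeyu} yields the dichotomy $e'(P_\upsilon) \in \{1,\, T(q_\upsilon)\}$ for any $K$-form of $\SP_4$, and this, combined with the constraint (via~\cite{voight}) on the set of ramified places of the quaternion algebra $\Hh_K$ imposed by $\G$ splitting at $\upsilon_0$, rules out $\Q(\sqrt 5)$. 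Hence $K = \Q$.

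For the concluding step, with $K = \Q$ we have $h_\Q = 1$, so Lemma~\ref{covolprotobound} gives $[\Gamma^{\mathrm{max}}:\Lambda^{\mathrm{max}}] \leqslant 2^{\#\mathcal{T}}$, where $\mathcal{T}$ is the set of finite places at which $P_\upsilon$ is not hyperspecial. The Emery--Kim formula \cite[Lemma~4.1]{emerykim} for $e'(P_\upsilon)$ at a special, non-hyperspecial parahoric gives $e'(P_\upsilon) \geqslant 1\cdot 5\cdot 7 = 35$ when $n$ is odd and $e'(P_\upsilon) \geqslant 3$ when $n$ is even, so $e'(P_\upsilon) > 2$ for every $\upsilon \in \mathcal{T}$. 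Since $S(\Lambda^{\mathrm{max}}) = \Psi(n)$ when $K = \Q$, Theorem~\ref{prasadinourspecialcase} and \eqref{08012025-gammacovolume} give
\begin{align*}
V_0 = \mu(\G_\infty/\Gamma^{\mathrm{max}}) = [\Gamma^{\mathrm{max}}:\Lambda^{\mathrm{max}}]^{-1}\,\Psi(n)\prod_{\upsilon < \infty} e'(P_\upsilon) \geqslant \Psi(n)\,2^{-\#\mathcal{T}}\prod_{\upsilon \in \mathcal{T}} e'(P_\upsilon),
\end{align*}
so $\mathcal{T} \neq \emptyset$ would force $V_0 > \Psi(n) \geqslant V_0$, a contradiction. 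Therefore $\mathcal{T} = \emptyset$, and by the equivalence \eqref{09012025-convenient} this means $\G$ splits at every finite place of $K$ and $e'(P_\upsilon) = 1$ for all $\upsilon < \infty$, which is the assertion.

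The argument is essentially bookkeeping; the one point that genuinely needs care is to verify that the hypotheses of each result being chained --- the no-symmetry property in Lemma~\ref{covolprotobound}, the numerical sweeps feeding Lemma~\ref{threeclaimsforthreeproofs} and the discriminant/degree bounds for $n = 2, 3$, and the applicability of the \cite{ganhankeyu} and \cite{emerykim} local formulas --- are all met under the sole standing assumption that each $P_\upsilon$ is special. Since maximal-volume parahorics are special, that assumption is satisfied, and the theorem follows.
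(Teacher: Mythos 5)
Your proposal is correct and follows essentially the same route as the paper: the paper itself presents Theorem \ref{theorem_when_parahorics_are_maximal} as a recapitulation of the accumulated arguments of Sections \ref{SectionBoundingthecovol} and \ref{sectionlocalcontributions} (its "proof" is the single line "Up until this point, we have proved the following result in particular"), and your assembly --- global reduction to $K \in \{\Q, \Q(\sqrt{5})\}$, elimination of $\Q(\sqrt{5})$ via the local analysis of Section \ref{subsection-takingtheparfacts}, and then the contradiction $V_0 > \Psi(n) \geqslant V_0$ from $[\Gamma^{\mathrm{max}}:\Lambda^{\mathrm{max}}] \leqslant 2^{\#\mathcal{T}}$ against $e'(P_\upsilon) > 2$ for $\upsilon \in \mathcal{T}$ --- is exactly that chain. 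Your explicit check that every link uses only speciality of the $P_\upsilon$ (guaranteed for maximal-volume parahorics) is the right point of care and matches the paper's own framing.
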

We now prove that any lattice of minimal covolume must, in fact, be of the form $\Gamma^\mathrm{max}$. 
\begin{lemma}\label{newlemma_covolume_must_decrease}
Suppose that $\Gamma \subseteq \SP(2n, \R)$ is a maximal lattice, associated to an algebraic $K$-group $\G$, with covolume $V_0$. Then $\Gamma = \Gamma^\mathrm{max}$; that is, every parahoric $P_\upsilon (\Gamma)$ has maximal volume among all parahoric subgroups of $\G(K_\upsilon)$.
\end{lemma}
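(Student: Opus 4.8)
The plan is to argue by contradiction: suppose $\Gamma$, a maximal lattice with covolume $V_0$ defined by a $K$-group $\G$, has some parahoric $P_{\upsilon_1} = P_{\upsilon_1}(\Gamma)$ in the coherent collection defining $\Lambda$ that is not of maximal volume. I will combine the minimality of $V_0$ with the structure results already obtained to force $e'(P_{\upsilon_1}) = 2$, and then rule this out. First, by the monotonicity recorded in Remark~2 of Section~\ref{subsubsectionparahoricsubgroups} --- the argument of \cite[Sect.~4.3]{belolipetskyemery} building on \cite[Sect.~3.8]{borelprasad}, which carries over to type $C_n$ --- replacing every non-maximal parahoric of $\Lambda$ by one of maximal volume yields $\Lambda^{\mathrm{max}}$ with $\mu(\G_\infty/\Gamma^{\mathrm{max}}) \leqslant \mu(\G_\infty/\Gamma) = V_0$; since $V_0$ is minimal, $\mu(\G_\infty/\Gamma^{\mathrm{max}}) = V_0$.

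Next I would feed $\Gamma^{\mathrm{max}}$ back into the rest of the paper. Every parahoric of $\Lambda^{\mathrm{max}}$ has maximal volume, hence is special (Remark~1 of Section~\ref{subsubsectionparahoricsubgroups}), which is exactly the standing hypothesis under which the argument of Sections~\ref{SectionBoundingthecovol}--\ref{sectionlocalcontributions} is carried out. Applying that argument to $\Gamma^{\mathrm{max}}$ gives $K = \Q$, $\G = \SP_\yoyo$, and that $\Gamma^{\mathrm{max}}$ is conjugate to $\Gamma_0 = \SP_\yoyo(\Z)$; in particular $V_0 = \Psi(n)$. Since passing from $\Gamma$ to $\Gamma^{\mathrm{max}}$ changes neither the underlying group nor the field, $\Gamma$ too is defined by $\G = \SP_\yoyo$ over $K = \Q$, so $S(\Lambda) = \Psi(n) = V_0$ by \eqref{covolumewithoutparahorics} (using $D_\Q = d_\Q = 1$). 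This is not circular: the instance of Theorem~\ref{maintheorem1} invoked here is the ``all parahorics of maximal volume'' case, whose proof does not use the present lemma.

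With these inputs the covolume identity for $\Gamma$ becomes purely combinatorial. By \eqref{08012025-gammacovolume} and $S(\Lambda) = V_0$, the equation $\mu(\G_\infty/\Gamma) = V_0$ reduces to $[\Gamma : \Lambda] = \prod_{\upsilon < \infty} e'(P_\upsilon)$. Let $\mathcal{T}$ be the finite set of finite places at which $P_\upsilon$ is not hyperspecial; because $\G = \SP_\yoyo$ splits over every $\Q_\upsilon$, ``not hyperspecial'' coincides with ``not of maximal volume'', so $\upsilon_1 \in \mathcal{T}$ and $\mathcal{T} \neq \emptyset$. For $\upsilon \notin \mathcal{T}$ one has $e'(P_\upsilon) = 1$, while for $\upsilon \in \mathcal{T}$ the integer $e'(P_\upsilon)$ differs from $1$, hence is $\geqslant 2$ (by the equivalences following \eqref{estrich}); therefore $\prod_{\upsilon < \infty} e'(P_\upsilon) = \prod_{\upsilon \in \mathcal{T}} e'(P_\upsilon) \geqslant 2^{\# \mathcal{T}}$. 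On the other hand, Lemma~\ref{covolprotobound} with $K = \Q$ (so that $h_\Q = 1$ and $[U_\Q^+ : U_\Q^2] = 1$) gives $[\Gamma : \Lambda] \leqslant 2^{\# \mathcal{T}}$. Comparing the two forces equality throughout, so $e'(P_\upsilon) = 2$ for every $\upsilon \in \mathcal{T}$; in particular $e'(P_{\upsilon_1}) = 2$.

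It remains to derive a contradiction by showing that no parahoric subgroup of the split group $\SP_\yoyo(\Q_p)$ can have $e'(P_\upsilon) = 2$ --- equivalently, that every non-hyperspecial parahoric of $\SP_\yoyo(\Q_p)$ satisfies $e'(P_\upsilon) \geqslant 3$. I would prove this from the explicit description, via Bruhat--Tits theory, of the reductive quotients $\textbf{M}_\upsilon$ of the parahoric group schemes of $\SP_\yoyo$: the rank-$2$ data is tabulated in \cite[Sect.~3]{ganhankeyu}, the split-group formulas underlying \cite[Lemma~4.1]{emerykim} cover the maximal non-special parahorics for all $n$, and the remaining (sub-maximal) parahorics are handled by computing $\#\textbf{M}_\upsilon(\F_{q_\upsilon})$ for the Levi attached to an arbitrary proper sub-diagram of $\widetilde{C}_n$ and inserting it into \eqref{estrich}. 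I expect this uniform lower bound $e'(P_\upsilon) \geqslant 3$ --- extending the computations already in the literature past the maximal parahorics --- to be the one genuinely technical step; everything else is bookkeeping around the bootstrap in the second paragraph. Once it is in place it contradicts the previous paragraph, so $\Gamma$ admits no non-maximal parahoric, i.e.\ $\Gamma = \Gamma^{\mathrm{max}}$.
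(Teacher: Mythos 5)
Your overall strategy coincides with the paper's: pass to the ``maximized'' lattice $\Gamma^{\mathrm{max}}$, use minimality of $V_0$ to conclude that it too has covolume $V_0$, run the main argument on it to get $K = \Q$ and $\G = \SP_\yoyo$ split at every place with $e'(P_\upsilon(\Gamma^{\mathrm{max}})) = 1$, and then show that a non-maximal parahoric in the collection defining $\Gamma$ would push its covolume strictly above $V_0$. However, two steps have genuine gaps. First, you apply Lemma \ref{covolprotobound} to obtain $\lb \Gamma : \Lambda \rb \leqslant 2^{\# \mathcal{T}}$, but the inequality \eqref{indexboundgaloiscohomology} underlying that lemma is derived under the standing assumption that every $P_\upsilon$ is special, so that the types $\Theta_\upsilon$ carry no symmetries --- and removing that assumption is precisely the point of the present lemma. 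For a non-special parahoric of the split group the type can be fixed by the order-two automorphism of the affine diagram of type $C_n$ (the Iwahori is the obvious example), and the Rohlfs/Borel--Prasad index bound then acquires an extra factor $\# \Xi_{\Theta_\upsilon} = 2$ at each such place. The paper avoids this by working instead with the inequality \cite[4.3.(15)]{belolipetskyemery}, which carries the correction factors $\# \Xi_{\Theta_\upsilon}$ explicitly, and therefore only needs $e'(P_\upsilon) > \# \Xi_{\Theta_\upsilon}$. With the corrected index bound $\lb \Gamma : \Lambda \rb \leqslant 4^{\# \mathcal{T}}$ your forcing argument no longer pins down $e'(P_\upsilon) = 2$; you would need the stronger uniform estimate $e'(P_\upsilon) \geqslant 5$ to reach a contradiction.

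Second, and more importantly, the quantitative input you rely on --- a uniform lower bound on $e'(P_\upsilon)$ for every non-hyperspecial parahoric of $\SP_\yoyo(\Q_p)$, for all $n$ and all $p$ --- is exactly the step you defer to an unexecuted enumeration of the reductive quotients $\textbf{M}_\upsilon$ attached to all proper subdiagrams of the affine $C_n$ diagram. That enumeration is the heart of the matter, and without it the proof is incomplete. The paper replaces it entirely by a single citation: for a non-special parahoric, Prasad's volume rigidity estimate \cite[Prop.~2.10.(iv)]{prasad} gives $e(P_\upsilon) \geqslant q_\upsilon^{n+1}/(q_\upsilon + 1)$, which combined with \eqref{campusfestival} yields $e'(P_\upsilon) \geqslant h(q_\upsilon, n)$ with $h$ increasing in both arguments; the single case $(q_\upsilon, n) = (2,2)$ where this bound is too weak is settled by the table in \cite{ganhankeyu}. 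I recommend adopting that estimate rather than attempting the case-by-case computation, which would in any event have to be carried out for every rank $n$.
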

\begin{proof}
The discussion in Section \ref{subsubsectionparahoricsubgroups} shows that also $\Gamma^\mathrm{max}$ has covolume $V_0$. Since both $\Gamma$ and $\Gamma^\mathrm{max}$ come from $\G$, Theorem \ref{theorem_when_parahorics_are_maximal} implies that $\G$ splits at all finite places --- in particular, a parahoric subgroup of $\G(K_\upsilon)$ is special if and only if it is hyperspecial, and consequently $\Gamma = \Gamma^\mathrm{max}$ if and only if every $P_\upsilon (\Gamma)$ is special. Theorem \ref{theorem_when_parahorics_are_maximal} also gives $e'(P_\upsilon(\Gamma^\mathrm{max})) = 1$, so we obtain from Theorem \ref{prasadinourspecialcase} and the inequality \cite[4.3.(15)]{belolipetskyemery} that
\begin{align}\label{gaarden_katholt}
1 = \frac{\mu (\G(\R) / \Gamma)}{\mu (\G(\R) / \Gamma^\mathrm{max})} = 
\frac{\lb \Gamma^\mathrm{max} : \Lambda^\mathrm{max} \rb}{\lb \Gamma : \Lambda \rb} 
\prod_{\upsilon < \infty} e'(P_\upsilon(\Gamma)) \geqslant \prod_{\upsilon < \infty} \frac{e' (P_\upsilon(\Gamma))}{\# \Xi_{\Theta_\upsilon}},
\end{align}
where $\Xi_{\Theta_\upsilon} \subseteq \mathrm{Aut}(\Delta_\upsilon)$ is the subgroup of all diagram automorphisms that preserve the type $\Theta_\upsilon = \Theta_\upsilon(P_\upsilon(\Gamma))$ of $P_\upsilon (\Gamma)$, and $\Delta_\upsilon$ denotes the local Dynkin diagram of $\G$ over $K_\upsilon$. \par Concretely, we have $\# \Xi_{\Theta_\upsilon} \in \lbr 1 , 2 \rbr$ for every $\upsilon$, and in fact $\# \Xi_{\Theta_\upsilon} = 1$ when $P_\upsilon (\Gamma)$ is special (see \cite[Sect. 3.1]{emerykim} or \cite[Sect. 3.2]{borelprasad}). Therefore, any factor corresponding to a special parahoric on the right-hand side of \eqref{gaarden_katholt} must be equal to $1$. The crucial fact, as we will see below, is that the converse holds as well: If $P_\upsilon (\Gamma)$ is not special, then the corresponding factor exceeds $1$. As this is impossible, the proof will be concluded once we demonstrate this claim. Equivalently, we must show that $e'(P_\upsilon (\Gamma)) > 2$ whenever $P_\upsilon (\Gamma)$ is not special. 
\par Let $\upsilon$ be any \textit{fixed} place such that $P_\upsilon (\Gamma)$ is not special. If the residue field of $K_\upsilon$ has degree $q_\upsilon = 2$ and $\G$ has rank $n = 2$, then our claim follows immediately from the inequality
\begin{align*}
e'(P_\upsilon (\Gamma)) \geqslant \frac{q_\upsilon^{4} - 1}{2(q_\upsilon + 1)} = \frac{15}{6} > 2,
\end{align*}
which follows by inspection of \cite[Sect. 3, Table 2]{ganhankeyu}. We therefore have either $n \geqslant 3$ or $q_\upsilon \geqslant 3$. In this situation, we can use the
“volume rigidity estimate” proved for non-special parahorics in \cite[Prop. 2.10.(iv)]{prasad} together with \eqref{campusfestival} (which is independent of the assumption we are in the process of justifying) to conclude that
\begin{align*}
e'(P_\upsilon (\Gamma)) \geqslant \frac{q_\upsilon^{n+1}}{q_\upsilon + 1} \prod_{j = 1}^n \lp 1 - \frac{1}{q_\upsilon^{2j}} \rp =: h(q_\upsilon, n).
\end{align*}
It is straightforward that $h(q_\upsilon , n)$ is increasing in both $q_\upsilon \geqslant 2$ and $n \geqslant 1$. In consequence, as $(q_\upsilon , n) \neq (2,2)$, we either have $h(q_\upsilon, n) \geqslant h(2,3) \approx 3.69$ or $h(q_\upsilon , n) \geqslant h(3,2) \approx 17.75$. In any case, we arrived at the desired conclusion.
\end{proof} }

\begin{small}
{\textbf{Acknowledgements.} The authors gratefully acknowledge (partial) funding from Deutsche For- schungsgemeinschaft via the grants DZ 105/02 and KO 4323/15. Moreover, the authors are grateful to F. Thilmany and A. Rapinchuk for helpful discussions, and to the two anonymous referees and the editor for several batches of valuable comments and suggestions that have considerably improved this paper.
}
\end{small}
\begin{footnotesize}

$\hrulefill$ \\\par 
\textsc{Department of Mathematics, Christian-Albrechts-Universität zu Kiel},\\ \textsc{24118-Kiel, Germany}. \textit{email address:}  [last name without accents]{\tt@math.uni-kiel.de} \\ \par 
\textsc{Department of Mathematics, Christian-Albrechts-Universität zu Kiel},\\ \textsc{24118-Kiel, Germany}. \textit{email address:} [last name]{\tt@math.uni-kiel.de} \\ \par 
\textsc{Department of Mathematics, Christian-Albrechts-Universität zu Kiel},\\ \textsc{24118-Kiel, Germany}. \textit{email address:} [last name with ö $\rightsquigarrow$ oe]{\tt@math.uni-kiel.de} \\ \par 
\end{footnotesize}
\end{document}